\numberwithin{equation}{section}
\newtheorem{thm}{Theorem}
\newtheorem{assumption}{Assumption}
\newtheorem{lemma}{Lemma}
\newcommand{\tr}{\mathrm{tr}}
\DeclareSymbolFont{rsfs}{U}{rsfs}{m}{n}
\DeclareSymbolFontAlphabet{\mathscrsfs}{rsfs}
\DeclareMathOperator*{\argmin}{arg\,min}
\date{\today}
\begin{document}

\title{Shuffled total least squares}

\author{Qian Wang and
    Daniel Sussman\footnote{Email: sussman@bu.edu}\\
    Boston University, Department of Mathematics and Statistics}

\maketitle

\begin{abstract}
Linear regression with shuffled labels and with a noisy latent design matrix arises in many correspondence recovery problems.
We propose a total least-squares approach to the problem of estimating the underlying true permutation, and provide an upper bound to the normalized Procrustes quadratic loss of the estimator.
We also provide an iterative algorithm to approximate the estimator and demonstrate its performance on simulated data.
\end{abstract}

\section{Introduction} \label{s:intro}

Correspondence and matching problems arise in numerous fields including computer vision \citep{CD16,Szeliski,Hartley},  network alignment \citep{Chen,Elmsallati,Klau,Zaslavskiy}, and sequence dating \citep{FMR16,Pet99,Ken63,Ken69,Ken70}.
Recently assignment problems have been receiving increased attention from statisticians, applied mathematicians, and computer scientists due to the increased availability of data from all sources.
Merging data sets enables researchers to answer questions that could not be answered with separate ones.
These problems typically involve the observation of two sets of variables where the correspondence between observations in one set to the other is unknown or obscured.
A prototypical example of this is the observation of two sets of point clouds associated with the same entities and the goal is to align the point clouds and discover the correspondence between the entities.

The classical problem of linear regression address the first aspect, aligning the two sets of point.
Given the input feature matrix $X\in \mathbb{R}^{n\times p}$ and the output matrix $Y\in \mathbb{R}^{n \times q}$, the statistical model then takes on the form $$Y = X\beta + E$$ where $\beta\in\mathbb{R}^{p\times q}$ are the unknown coefficients, $E\in \mathbb{R}^{n\times q}$ is the noise usually assumed to be Gaussian.

A variant of the standard set up is the case when the true correspondence between the outputs and the inputs is unknown. That is, we observe instead a permuted data set $\{x_i,y_{\pi(i)}\}_{i=1}^n$, where $\pi$ is an unknown permutation. This is known as shuffled linear regression \citep{PWC16,FMR16,PWC17,CD16}, in which case the model being considered takes on the form
$$Y = \Pi X^T\beta + E$$
where $\Pi$ is an unknown permutation matrix.
Shuffled linear regression is the natural framework for analyzing experiments that
simultaneously involves a large number of objects such as flow cytometry \citep{APZ17}; it has also been used in situations when the order in which the measurements are taken is uncertain such as archaeological measurements \citep{R51}.

Another variant of the classical linear regression is known as the total least-squares regression \citep{GL80} where instead of observing the true design matrix $X$, we observe a noisy version of it, $Y_1$. The model hence takes on the form 
\begin{align*}
Y_1 &= X + E_1 \\
Y_2 &= X\beta + E_2.
\end{align*}
Thus making $X$ a latent design matrix that is no longer known. 
This assumption is frequently more realistic in real-world applications due to samplings errors, human errors, modeling errors, etc, and the resulting model is considered a type of ``errors-in-variable'' model in statistics.

While the regression problem with errors in the predictor variables is similar to the regular regression problem, the standard method of using ordinary least-squares is no longer optimal for the errors in variables case.
To see this, suppose we estimate the coefficients using ordinary least squares.
Then we will have $\hat{\beta}_{ols}=(Y_1^TY_1)^{-1}Y_1^TY_2$, the expectation of which is $\mathbb{E}(\hat{\beta}_{ols})=\beta-(n-p-1)(X^TX)^{-1}S\beta$, where $p$ is the dimension of $X$ and $S=\frac{1}{n}\mathbb{E}(E_1^TE_1)$ \citep{HM72}.
Hence, the OLS estimator is biased and indeed inconsistent, so instead we use ``total least-squares'' which takes into consideration noise from both the input and the output in its estimation methods. The detail are discussed in section~\ref{Section: 2.1}.

In this paper, the model under consideration combines these two variants. That is, we consider the model with both shuffled labels and noisy latent design matrix. Formally, let $Y_1,Y_2\in \mathbb{R}^{n\times p}$ represent the observations, and $R\in \mathbb{R}^{p\times p}$ represent the linear transformation matrix, then the model takes on the form
\begin{align}
\label{eq:tls-model}
    Y_1 &= X + E_1\\
    Y_2 &= \Pi^*XR + E_2,\notag
\end{align}
where $\Pi^*$ is an unknown permutation matrix, $X\in\mathbb{R}^{n\times p}$ is a latent design matrix, and $E_1,E_2\in \mathbb{R}^{n\times p}$ are the noise matrices.

Model \eqref{eq:tls-model} has applications in many real-world situations and here we discuss two broad areas.
The first line of application lies in the same realm as  applications discussed in \citet{PWC16}, which studies shuffled linear regression where $X$ is observed without noise.
In the pose and correspondence estimation problem, images of the same object can be taken during different conditions such as different angles and positions, and in order to align them, keypoints such as corners, edges, or other features, which are invariant to those changes, are detected by considering an area of certain pixel intensities around it \citep{David2004-dd}.
After detecting the keypoints in the two images individually, one can recover the linear transformation between two given images of a similar object through matching the detected keypoints.

In practice, image recording devices fail to record the intensity of a given image scene exactly, resulting in random noise or blurring in the recorded scenes~\citep{HP06}.
Therefore, the shuffled total least-squares regression setting is a more suitable model as the keypoints in both images will be estimated with noise.
More generally, many alignment problems involve aligning objects which are all observed with noise where an errors-in-variables approach would be a more accurate problem representation.

A second area of application pertains to the alignment of two embeddings such as graph embeddings, word embedding, sequence embeddings, and so on, \citep{CX19,CT17,CM15,BB19}.
For example, the graph matching problem considers finding the correspondence between nodes across two or multiple graphs and one way of doing so is through the matching of the graph embeddings.
As a specific instance, suppose one observes an adjacency matrix $A\in\{0,1\}^{n\times n}$ representing a friendship network from Facebook where the nodes represent users and the edges represent a connection, and a bipartite adjacency matrix $B \in \{0,1\}^{n \times m}$ representing a purchasing network from Amazon, where the nodes represent $n$ shoppers and $m$ products and the edges represent a purchase.
One is tasked to find the user correspondence across these two networks.
A common method to do so is to match the node embeddings of the two adjacency matrices \citep{Heimann2018-ci,Liu2020-av,Chu2019-yv,Sun2020-sm,Nelson2019-zc}, see also \citep{Sussman14,Athreya16}.
Suppose each user has a feature representation which can be estimated from the graph, then ideally the estimated feature matrix from one graph would be a transformed version of another.
With noise present and the labels shuffled, model \eqref{eq:tls-model} can be used as a framework to recover the correspondence.

As another example, In computational biology one challenge is to align the nodes between two protein-protein interaction (PPI) networks where the nodes represent proteins and edges represent the existence of an interaction between two proteins \citep{Fan19}.
The structure of these networks is becoming increasingly well-known, especially with the high-throughput interaction detecting devices.
However, as the reads are often subjected to error, the mis-identification of interactions between proteins generates graphs that are inevitably noisy representations of the true underlying protein structures.
If using an embedding approach for matching, we are again in the settings of model \eqref{eq:tls-model}.

Note, there are some cases when an OLS approach is more appropriate, an example is the sensor network setting discussed by \citet{PWC16}.
In large sensor networks, it is often the case that the bandwidth sent from the sensors to the fusion center is dominated by the bandwidth that serves to identify the sensor rather than the information being transferred.
A header-free communication scheme is proposed to alleviate this issue \citep{KSFAD09}, under which one is tasked with the recovery of the sensor identities based primarily on the sensor observations.
A shuffled OLS approach is appropriate if the sensors are placed in fixed locations that can be highly accurately measured at the time of installation.
Hence, there is essentially no noise in the design matrix, making an OLS suitable.

\subsection{Contributions} \label{ss:contributions}

This paper addresses the correspondence estimation problem in the linear regression setting with both shuffled output labels and corrupted input data matrix as in model \eqref{eq:tls-model}.
We focus on the linear model \eqref{eq:tls-model} with a latent but fixed design matrix $X\in \mathbb{R}^{n\times p}$, a unknown coefficient matrix $R\in \mathbb{R}^{p\times p}$, and Gaussian noises $E_1,E_2\sim N(0,\Sigma_p)$.
We estimate $\Pi$ via the total least-squares method which gives $\hat{\Pi}=\argmin_{\Pi\in\mathcal{P}_n}\sum_{i=p+1}^{2p}\sigma_i([Y_2|\Pi Y_1])$, and we evaluate $\hat{\Pi}$ by the normalized Procrustes quadratic loss defined as $\frac{1}{\|X\|_F^2}\min_{Q\in \mathcal{O}(p)} \| \Pi^*X-\hat{\Pi} X Q\|_F^2$.
We provide an upper bound of this loss as a function of the latent design matrix $X$, the coefficient matrix $R$, the dimensions $(n,p)$, and the covariance matrix $\Sigma$.
We compare the result with previous literature and show that our bound compares favorably with previous bounds shown for related problems.
Computationally, since the above estimate is NP-hard to compute, we compare, via simulations, the performance of four proposed approximation methods.
We use an oracle-type estimator to examine the upper bound provided by the main result. 

\subsection{Organization} \label{ss:org}

The remainder of the paper is organized as follows.
In the next section, we present the problem background and related work.
In Section~\ref{s:problem}, we define notation and formally state the problem of interest.
The main result and the discussion of implications are presented in Section~\ref{s:main}.
Section~\ref{s:sim} presents the iterative methods for approximating the estimates and empirical results from simulations in various contexts.
Appendix~\ref{Section: 5} has proofs the proofs of the main theorem; the supporting lemmas are stated and proved in Appendix~\ref{app:tech_lemma}.

\section{Background and Relative Work} \label{s:background}
This section reviews work related to the total least-squares method and the problem of latent permutation estimation. 
We first briefly overview notations used throughout the paper.
We use $\Pi$ to denote permutation matrices, and  $\mathcal{P}_n$ to denote the set of $n\times n$ permutation matrices.
$Y_\Pi$ represents the matrix $Y$ with rows permutated according to the permutation $\Pi$. The $k$th column of the matrix $A$ is denoted by $a_k$, and the entries are denote $a_{jk}$ or $A_{jk}$. $\mathcal{O}(p)$ denotes the set of orthogonal matrices of dimension $p$.
The eigenvalues of a square matrix $A$ are arranged in weakly descreasing order: $\lambda_{max}(A)=\lambda_1(A)\ge \lambda_2(A)\ge \dotsb \ge \lambda_n(A)=\lambda_{min}(A)$.
Likewise, the singular values of a matrix $B$ with rank $r$ are ordered $s_1(B)\ge s_2(B)\ge \dotsb \ge S_r(B)$.
We use $\|B\|_*$ and $\|B\|_F$ to denote the trace and Frobenius norm of a matrix $B$, and $c,c_1,c_2$ to denote universal constants that may change from line to line.

\subsection{ Total least-squares and errors-in-variables}
\label{Section: 2.1}
To clarify the total least-squares (TLS) aspect of the problem, let us first assume that the permutation matrix $\Pi^*$ is known and equal to the identity.
TLS regression was first introduced by \citet{GL80} as a natural generalization of the ordinary least-squares (OLS) method when both the input data and output data are being perturbed by noise.
The model under consideration therefore becomes $Y_1 = X+E_1; Y_2=XR+E_2$.
In this setting, both $X \in \mathbb{R}^{n\times p}$ as well as $R\in \mathbb{R}^{p\times p}$ are unobserved.
Instead, we observe $(Y_1,Y_2)\in(\mathbb{R}^{n \times p}\times\mathbb{R}^{n\times p})$.

To estimate the unknown quantities, the OLS method seeks the solution to the optimization problem
\begin{align}
    &\min_{\hat{R}\in \mathbb{R}^{p \times p}}\|Y-X\hat{R}\|^2_F
\end{align}
The TLS method, as a natural generalization of the OLS method, does so in solving the following optimization problem: 
\begin{align}
\label{eq:tls-opt}
    &\min_{\hat{X}\in \mathbb{R}^{n \times p},\hat{R}\in\mathbb{R}^{p \times p}}\|[Y_2-\hat{X}\hat{R}|Y_1-\hat{X}]\|^2_F.
\end{align}
The denoised observations are $\hat{Y_1}=\hat{X}$ and $\hat{Y_2}=\hat{X}\hat{R}$.
As $\hat{Y_2}=\hat{Y_1}\hat{R}$, we have $\mathrm{rank}([\hat{Y_2}|\hat{Y_1}])\le p$, therefore the optimization problem \eqref{eq:tls-opt} is in fact equivalent to a matrix low-rank approximation problem  \citep{MH07},
\begin{align}
\label{eq:low-rank}
    &\min_{\hat{Y}_1, \hat{Y}_2 \in \mathbb{R}^{p\times p}}\|[Y_2|Y_1]-[\hat{Y_2}|\hat{Y_1}]\|^2_F\notag\\
    & s.t.~\mathrm{rank}([\hat{Y_2}|\hat{Y_1}])\le p.
\end{align}
Problem \eqref{eq:low-rank} can be understood as a search for the best approximation of the data $[Y_2|Y_1]$ by a matrix of a lower rank.

The solution is given by \citet{EY36} as follows.
Let $Y=[Y_2|Y_1]=U_Y\Sigma_Y V_Y^T$ be the reduced singular value decomposition of $[Y_2|Y_1]$, where $U_Y\in \mathbb{R}^{n \times 2p}$ and $V_Y\in \mathbb{R}^{2p \times 2p}$ have orthonormal columns and $\Sigma_Y\in \mathbb{R}^{2p \times 2p}$ is diagonal with entries $\sigma_1([Y_2|\Pi^*Y_1])\ge \sigma_2([Y_2|\Pi^*Y_1])\ge \dotsb
\ge \sigma_{2p}([Y_2|\Pi^*Y_1])\ge 0$, with $\sigma_i([Y_2|\Pi^*Y_1])$ being the $i$th singular value of $[Y_2|\Pi^*Y_1]$.
Let $\Tilde{U}_Y\in \mathbb{R}^{n \times p}$ and $\Tilde{V}_Y\in \mathbb{R}^{2p \times p}$ be, respectively, the first $p$ columns of $U_Y$ and $V_Y$, and $\Tilde{\Sigma}_Y\in \mathbb{R}^{p \times p}$ be the diagonal matrix with diagonals $\sigma_1([Y_2|Y_1]),\dotsc, \sigma_p([Y_2|Y_1])$. 
The minimizer of \eqref{eq:low-rank} is $[\hat{Y_2}|\hat{Y_1}]=\Tilde{U}_Y\Tilde{\Sigma}_Y\Tilde{V}_Y^T$, so $\hat{Y}_2=\Tilde{U}_Y\Tilde{\Sigma}_Y\Tilde{V}_Y^T[,1:p]$ and $\hat{Y}_1=\Tilde{U}_Y\Tilde{\Sigma}_Y\Tilde{V}_Y^T[,p+1:2p]$.

Hence, we have $\hat{X}=\hat{Y}_1$ and $\hat{R} = (\hat{Y}_1^T \hat{Y}_1)^{-1}\hat{Y}_1^T \hat{Y}_2$.
The optimal value of the minimization of \eqref{eq:tls-opt} and \eqref{eq:low-rank} is
\begin{align}
\label{eq:tls-loss}
    \sum_{i=p+1}^{2p}\sigma^2_i([Y_2|Y_1]).
\end{align}

The TLS estimator $(\hat{X},\hat{R})$ is known to be the maximum likelihood estimator in the ``errors-in-variable'' model under the assumption that $vec([Y_2|Y_1])$ is a zero-mean, normally distributed random vector with a covariance that is a multiple of the identity \citep{MH07}.

\subsection{Latent Permutation Estimation}\label{sec:latent_perm}

While there is little work in the linear regression setting with both corrupted inputs and outputs, there are many related works in the non-noisy input setting in which permutation recovery has been studied.
In \citet{CD16}, the problem of matching two sets of noisy vectors was studied statistically.
There the model has the form $Y_1=X + E$ and $Y_2 = \Pi^*X +E$, where $X\in \mathbb{R}^{n\times p}$ is an unknown matrix and $\Pi^*\in \mathcal{P}_n$ is an unknown permutation matrix.
The goal is to recover $\Pi^*$.
\citet{CD16} establishes minimax rates on the separation distance between rows of $X$ that allows for exact permutation recovery by studying the least-squares estimator defined by $\hat{\Pi}=\argmin_{\Pi\in\mathcal{P}_n}\|\Pi Y_2-Y_1\|_F^2$.

The problem of permutation estimation has been studied in more detail for the case that $X$ is observed without noise.
\citet{PWC16} considered the one-dimensional case, where the model is of the form $y=\Pi^*Xr^*+e$ where $r^*\in\mathbb{R}^p$ is an unknown vector, $X\in\mathbb{R}^{n\times p}$ is a design matrix of i.i.d.\ standard Gaussian variables, $\Pi^*$ is an unknown $n\times n$ permutation matrix, and $e\in \mathbb{R}^n$ is observation noise. 
The paper provides necessary and sufficient conditions, in terms of the signal-to-noise ratio, under which the probability of exactly recovering the true permutation goes to one.
Computationally, $\hat{\Pi}$ can be computed exactly in polynomial time according to a sorting algorithm. 
A multivariate version of this model, $Y=\Pi^*XR+E$, was considered in \citet{PWC17} where the interest is not in the exact permutation recovery, but in the prediction error defined by $\|\hat{\Pi}X\hat{R}-\Pi^*XR\|_F^2$.
The paper characterizes the minimax prediction error by analyzing the maximum likelihood estimator  $(\hat{\Pi},\hat{R})=\argmin_{\Pi\in \mathcal{P}_n;R\in \mathbb{R}^{d\times  m}}\|Y-\Pi XR\|_F^2$.

The statistical seriation problem considers permuting the rows of a matrix so that its columns have the same shape such as monotone increasing or unimodal.
In this setting, one observes only a single matrix $Y$, with $Y=\Pi^*X^*+E$ where the columns of $X$ are constrained to be unimodal.
\citet{FMR16} established minimax rates for the prediction error  $\|\hat{\Pi}\hat{X}-\Pi^*X\|_F^2$ by analyzing the least-squares estimators.

Another closely related line of research comes from \citet{Dai2019-on,Dai2020-dh} who analyze the problem of Gaussian database alignment.
The model that they study is very similar to ours however there are some key diffferences.
First, from our vantage point, they consider the case that the latent design is itself drawn from a Gaussian distribution whereas our results are for a fixed latent design matrix.
This change in setting enables the second major difference which is that \citeauthor{Dai2019-on} consider the problem of permutation recovery from the perspective of Hamming distance from the truth whereas our work analyzes the Procrustes quadratic loss.
While it may be possible to use the results on the Procrustes quadratic loss to derive bounds on the hamming distance for certain latent designs, we did not explore that in this work.
Finally, the estimation technique considered is a likelihood approach which is identical to our approach under certain assumptions on the covariance of the noise however will not in general correspond.
The authors consider a canonical setting under which an exact computation of the maximizer is possible in polynomial time.

\section{Problem Setting and Evaluation Criteria}\label{s:problem}

In this section, we discuss in detail about the key assumptions of the model, the total least-squares estimator, and the metric to evaluate its performance.

\subsection[Problem Setting and Total Least-Squares Estimator]{Problem Setting \& Total Least-Squares Estimator}

In the introduction section, we have motivated model \eqref{eq:tls-model} as a type of errors-in-variable model. We now state our key assumption on each of the components in the model.

\begin{assumption}
\label{Assumption:1}
(Latent Design Matrix) We assume that the latent design matrix $X\in \mathbb{R}^{n\times p}$ has condition number 1, that is, $\kappa(X)=\frac{\sigma_1(X)}{\sigma_p(X)}=1$.
\end{assumption}

\begin{assumption}
\label{Assumption:2}
(Coefficient Matrix) We assume $\sigma_p(R)\le 1$ and $\sigma_1(R)\ge 1$. 
\end{assumption}

\begin{assumption}
\label{Assumption:3}
(Noise Variables) The noise variables $E_1$, $E_2$ are independent with i.i.d. rows distributed as $\mathcal{N}(\Sigma)$ for some $p\times p$ covariance matrix $\Sigma$.
\end{assumption}

Assumption~\ref{Assumption:1} is strong but is mostly needed to ensure a relationship between the Procrustes quadratic loss and the TLS error as we describe below.
Note, if the rows of $X$ are i.i.d Gaussian with mean zero and covariance matrix $\mathbf{C}$, then the condition number constraint will hold approximately if $\kappa(\mathbf{C})=1$ \citep{GT14}. In the case when $\kappa(X)\ne 1$, we could work on a transformed version of $Y_1$. Let $Y_1 = U_{Y_1}S_{Y_1}V_{Y_1}^T$, and define the transformed obervations as $Y_1^{new}=U_{Y_1}$. This way we have $\kappa(Y_1^{new})=\kappa(U_{Y_1})=1$.

In many cases we might not have the condition number condition met.
In such cases when $\kappa(X)\ne 1$, we could instead work on a transformed version of $Y_1$. Let $$Y_1 = U_{Y_1}S_{Y_1}V_{Y_1}^T,$$ and define the transformed observations as $$Y_1^{new}=U_{Y_1}=Y_1V_{Y_1}S_{Y_1}^{-1}.$$
This way we have $\kappa(Y_1^{new})=\kappa(U_{Y_1})=1$ which will generally lead to $\kappa(XV_{Y_1}S_{Y_1}^{-1}) \approx 1$.
Alternatively, the lost result could be stated in terms of similarly normalized version of $X$, namely $XV_XS_X^{-1}$.

Assumption \ref{Assumption:2} is an assumption on the singular values of the unknown linear transformation matrix $R$. This assumption is, as will be clear in the proof section, for mathematical convenience only. We assume this so that we can write in short $\max\{1,\sigma_1(R)\}$ as $\sigma_1(R)$, and $\min\{1, \sigma_p(R)\}$ as $\sigma_p(R)$. The main result can be extended to $R$ with arbitrary singular values by replacing the original forms back.

In Assumption \ref{Assumption:3} we assumed the normality of the noise variables. 
Recall that the total least-squares estimator is the maximum likelihood estimator in the errors-in-variables model if the noise term has a mean zero Gaussian distribution with a covariance that is of the multiplication of the identity \citep{MH07}.
However, since this situation is rarely met in real applications, we instead consider a more general case when $E\sim N(0,\Sigma_p)$, that is, we allow possible correlations between variables. While the total least-squares approach may not be the most optimal approach in this setting, it is useful to understand its performance and this will inform future approaches that take into account this asymmetric variance.

\subsection{Evaluation Metric}In this section we first introduce the total least-squares estimator of the latent permutation matrix, and then we state the proposed criteria for estimator evaluation, namely, we evaluate its performance by a quantity named normalized Procrustes quadratic loss, and finally we explain the rational behind this choice.

Let $Y_\Pi=[Y_2|\Pi Y_1], M_\Pi = [\Pi^*XR|\Pi X]$, and $E_\Pi=[E_2|\Pi E_1]$, we could writing model \eqref{eq:tls-model} equivalently as
\begin{align}
\label{eq:tls-model2}
    Y_\Pi &= M_\Pi + E_\Pi.
\end{align}
To estimate $\Pi^*$ using the total least-squares method, recall that in section \ref{Section: 2.1} we have phrased the optimization problem in total least-squares regression as a matrix low-rank approximation problem, and the optimal value of the minimization of loss is the sum of the least $p$ squared singular values of the concatenated matrix of the two observation matrices, namely, $Y_2$ and $Y_1$. In a similar fashion, now with the permutation matrix $\Pi$ introduced, the total least-squares method seeks the optimal permutation matrix which minimizes the loss term.  Hence, the estimator is given by
\begin{align}
\label{eq:tls-estimator}
    \hat{\Pi}=\argmin_{\Pi\in \mathcal{P}_n}\sum_{i=p+1}^{2p}\sigma_i^2(Y_\Pi).
\end{align}
Though our main interest lies in estimating the underlying permutation matrix, it is worth mentioning that once a $\hat{\Pi}$ is found, the model can be viewed as a regular errors-in-variables model, and one can estimate the latent design matrix $X$ and the unknown linear transformation matrix $R$ using the total least-squares method.

In Section \ref{s:background}, we have seen multiple criteria being proposed to evaluate an estimator $\hat{\Pi}$. Two of the most commonly ones used are the hamming distance between $\hat{\Pi}$ and $\Pi^*$ defined as $d_H(\hat{\Pi},\Pi^*)=\#\{i|\hat{\Pi}(i)\ne \Pi^*(i)\}$,
and the normalized quadratic loss defined as 
$\frac{1}{np}\|\hat{\Pi}X-\Pi^*X\|_F^2$.
The Hamming distance is intuitively appealing but the structure of $X$ can lead to problems with this metric.
Specifically, if $X$ has multiple identical rows, $\Pi^*$ becomes non-identifiable and there are multiple $\hat{\Pi}$ which would make the quadratic loss zero.

While in general a small quadratic loss does not guarantee a small Hamming distance, provided the rows of $X$ are sufficiently separated, they will be strongly correlated. 
In the settings of total least-squares, a natural generalization of the normalized quadratic loss is the normalized Proscrustes quadratic loss, defined as
$$\frac{1}{\|X\|_F^2}\min_{Q\in \mathcal{O}(p)} \| \Pi^*X-\hat{\Pi} X Q\|_F^2.$$
That is, in the evaluation of the TLS estimator, we allow for an orthogonal rotation of the predicted value.

Allowing for orthogonal rotation, that is, evaluating estimators in after a Procrustes alignment, is a common practice in total least-squares estimation method.
Depending on the problem at hand, there might be a variety of reasons to do so.
Here the allowance of this orthogonal rotation is due to the fact that the total least-squares estimator $\hat{\Pi}$ might not lead to a small quadratic loss,  $\|\hat{\Pi}X-\Pi^*X\|_F^2$,  due to lack of identifiability.

To see this, let us consider a noiseless case when $E_1=E_2=0$, and suppose we have a design matrix with the following structure
\begin{align*}
   X = \begin{bmatrix}
    \mathbf{1}_5&-\mathbf{1}_5\\
    \mathbf{1}_5&\mathbf{1}_5
\end{bmatrix},
\end{align*}
where $\mathbf{1}_5$ is a column vector of $1'$s of size 5.
For convenience we assume both $\Pi^*$ and $R$ are identity matrices.
Consider the permutation matrix 
\begin{align*}
    &\Tilde{\Pi} = \begin{bmatrix}
   0&I_{5}\\
   I_{5}&0
   \end{bmatrix}
\end{align*}
Note that since $\tilde{\Pi} X = X\begin{bmatrix}
    1 & 0 \\ 0 & -1
\end{bmatrix}$, it holds that
\begin{align*}
    \sum_{i=p+1}^{2p}\sigma_i([Y_2|\Tilde{\Pi} Y_1]) &=\sum_{i=p+1}^{2p}\sigma_i([X|\Tilde{\Pi} X]) \\
    & = \sum_{i=p+1}^{2p}\sigma_i([X|{\Pi}^* X])= \sum_{i=p+1}^{2p}\sigma_i([Y_2|{\Pi}^* Y_1])=0.
\end{align*}
Here the Hamming distance between $\tilde{\Pi}$ and $\Pi^*$ is large and the quadratic loss is also large, $\frac{1}{np}\|\Pi^*X-\Tilde{\Pi}X\|_f^2=2$, but both are minima for the TLS problem.

For the Procrustes quadratic loss, 
 $$\frac{1}{\|X\|_F^2}\min_{Q\in \mathcal{O}(p)} \| X-\hat{\Pi} X Q\|_F^2 =\frac{1}{np} \|X-\hat{\Pi} X Q^\dagger\|_F^2=0,$$
where $Q^\dagger=\Tilde{Q} = \begin{bmatrix}
    1&0\\
    0&-1
    \end{bmatrix}.$\\
However, when $\kappa(X)=1$, Lemma~\ref{lem:pql_svals} guarantees the relationship
$$\min_{Q\in \mathcal{O}(p)} \| X-\Pi X Q\|_F^2 \le2\sum_{i=1+p}^{2p} \sigma_i^2(X|\Pi X).$$
This relationship tells us that the total least-squares method, which seeks to minimize the sum of the least $p$ squared singular values, produces an estimate such that the Procrustes quadratic loss is at most half the optimal objective value of the total least-squares problem.
Therefore, without putting further shape or distributional constraints on $X$, in our settings, it is more suitable to choose the Procrustes quadratic loss as the loss measure.

\section{Main Results}\label{s:main}
In this section, we state our main results and discuss some of their consequences.

Proof of the theorem can be found in Section \ref{Section: 5}.
In this section we state and prove our main result which provides an upper bound on the normalized Procrustes quadratic loss of the total least squares estimator. 
\subsection{Main Result}
Our main theorem provides an upper bound on the loss measure in terms of $\Sigma$, $X$, $R$, and the dimensions $(n,p)$, with $c$ denoting an absolute constant.
\begin{thm}
\label{Theorem: 1}
For the statistical model \eqref{eq:tls-model}, under the assumptions \ref{Assumption:1} - \ref{Assumption:3} , the total least squares estimator $\hat{\Pi}$ as stated in \eqref{eq:tls-estimator} satisfies
\begin{align}
\label{eq:main-result}
     &\frac{\min_{Q\in \mathcal{O}(p)} \| \Pi^*X-\hat{\Pi} X Q\|_F^2}{\|X\|_F^2}\notag\\
    &\le  \frac{2p}{\sigma_p^2(R)\|X\|_F^2}\left(1+\eta a_n\right)\lambda_1(\Sigma)\left[16\sigma_1(R) \|X\|_F\sqrt{2n} +2n\right],
\end{align}
where 
$a_n = \sqrt{\frac{\tr(\Sigma)}{\lambda_1(\Sigma)}\frac{\log(n)}{c n}}$,
with probability greater than  
\begin{align*}
1 - n^{-\eta^2},
\end{align*}
where $c$ is at least $\frac{1}{32}$.
\end{thm}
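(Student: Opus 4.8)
The plan is to pass from the Procrustes loss to a tail sum of singular values of the signal matrix $M_{\hat\Pi}=[\Pi^*XR|\hat\Pi X]$, bound that tail using the defining optimality of $\hat\Pi$ together with a cancellation of Frobenius norms, and close with Gaussian operator-norm estimates for $E_1,E_2$. For Step~1, left-multiply by $\Pi^{*T}$ and set $\Pi:=\Pi^{*T}\hat\Pi\in\mathcal P_n$; permutation-invariance of the Frobenius norm and of the singular spectrum gives $\min_{Q\in\mathcal O(p)}\|\Pi^*X-\hat\Pi XQ\|_F^2=\min_{Q\in\mathcal O(p)}\|X-\Pi XQ\|_F^2$ and $\sigma_i(M_{\hat\Pi})=\sigma_i([XR|\Pi X])$. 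Writing $[XR|\Pi X]=[X|\Pi X]\,\mathrm{diag}(R,I_p)$ and using $\sigma_i(AD)\ge\sigma_i(A)\,\sigma_{\min}(D)$ together with Assumption~\ref{Assumption:2} ($\sigma_p(R)\le 1$), we get $\sigma_i([XR|\Pi X])\ge\sigma_p(R)\,\sigma_i([X|\Pi X])$; combining with Lemma~\ref{lem:pql_svals} (applicable since $\kappa(X)=\kappa(\Pi X)=1$ by Assumption~\ref{Assumption:1}),
\[
\min_{Q\in\mathcal O(p)}\|\Pi^*X-\hat\Pi XQ\|_F^2\le 2\sum_{i=p+1}^{2p}\sigma_i^2([X|\Pi X])\le\frac{2}{\sigma_p^2(R)}\sum_{i=p+1}^{2p}\sigma_i^2(M_{\hat\Pi}).
\]

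\textbf{Step 2 (optimality and a Frobenius cancellation).}
Since $\sum_{i=1}^{2p}\sigma_i^2(Y_\Pi)=\|Y_1\|_F^2+\|Y_2\|_F^2$ does not depend on $\Pi$, the estimator \eqref{eq:tls-estimator} equivalently maximizes $\sum_{i=1}^{p}\sigma_i^2(Y_\Pi)$, so $\sum_{i=1}^{p}\sigma_i^2(Y_{\hat\Pi})\ge\sum_{i=1}^{p}\sigma_i^2(Y_{\Pi^*})$. The structural fact driving the argument is that $M_{\Pi^*}=\Pi^*X[R|I_p]$ has rank at most $p$: projecting $Y_{\Pi^*}=M_{\Pi^*}+E_{\Pi^*}$ onto a $p$-dimensional subspace containing the row space of $M_{\Pi^*}$, expanding, discarding a nonnegative square, and estimating $|\langle M_{\Pi^*},E_{\Pi^*}\rangle|\le\|M_{\Pi^*}\|_*\|E_{\Pi^*}\|_{op}\le\sqrt p\,\|M_{\Pi^*}\|_F\|E_{\Pi^*}\|_{op}$ yields $\sum_{i=1}^{p}\sigma_i^2(Y_{\Pi^*})\ge\|M_{\Pi^*}\|_F^2-2\sqrt p\,\|M_{\Pi^*}\|_F\|E_{\Pi^*}\|_{op}$. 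Dually, with $\hat P$ the projection onto the top-$p$ right singular subspace of $Y_{\hat\Pi}$, expanding $\sum_{i=1}^{p}\sigma_i^2(Y_{\hat\Pi})=\|M_{\hat\Pi}\hat P+E_{\hat\Pi}\hat P\|_F^2$ and using $\|M_{\hat\Pi}\hat P\|_F^2\le\|M_{\hat\Pi}\|_F^2-\sum_{i=p+1}^{2p}\sigma_i^2(M_{\hat\Pi})$ (because $\hat P^\perp$ is a rank-$p$ projection) gives $\sum_{i=1}^{p}\sigma_i^2(Y_{\hat\Pi})\le\|M_{\hat\Pi}\|_F^2-\sum_{i=p+1}^{2p}\sigma_i^2(M_{\hat\Pi})+2\sqrt p\,\|M_{\hat\Pi}\|_F\|E_{\hat\Pi}\|_{op}+p\|E_{\hat\Pi}\|_{op}^2$. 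Permutations preserve the Frobenius norm, so $\|M_{\Pi^*}\|_F^2=\|XR\|_F^2+\|X\|_F^2=\|M_{\hat\Pi}\|_F^2$, and chaining the three inequalities cancels the $\|M\|_F^2$ terms, leaving
\[
\sum_{i=p+1}^{2p}\sigma_i^2(M_{\hat\Pi})\le 2\sqrt p\,\|M_{\Pi^*}\|_F\|E_{\Pi^*}\|_{op}+2\sqrt p\,\|M_{\hat\Pi}\|_F\|E_{\hat\Pi}\|_{op}+p\|E_{\hat\Pi}\|_{op}^2.
\]

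\textbf{Step 3 (deterministic bookkeeping and Gaussian tails).}
By Assumption~\ref{Assumption:2} ($\sigma_1(R)\ge1$), $\|M_{\Pi^*}\|_F=\|M_{\hat\Pi}\|_F\le\sqrt{1+\sigma_1^2(R)}\,\|X\|_F\le\sqrt2\,\sigma_1(R)\|X\|_F$; and since permuting rows preserves the operator norm, $\max\{\|E_{\Pi^*}\|_{op},\|E_{\hat\Pi}\|_{op}\}\le\|E_1\|_{op}+\|E_2\|_{op}$, which is precisely what makes the bound uniform over the data-dependent $\hat\Pi$. It remains to control $\|E_1\|_{op}+\|E_2\|_{op}$: writing each $E_j=Z_j\Sigma^{1/2}$ with $Z_j$ having i.i.d.\ $N(0,1)$ entries, Chevet's inequality gives $\mathbb E\|E_j\|_{op}\le\sqrt{n\lambda_1(\Sigma)}+\sqrt{\tr(\Sigma)}$, and $Z_j\mapsto\|Z_j\Sigma^{1/2}\|_{op}$ is $\sqrt{\lambda_1(\Sigma)}$-Lipschitz, so Gaussian concentration at level $t=\eta\sqrt{2\log n}$ produces, with probability at least $1-n^{-\eta^2}$, an estimate of the form $\|E_j\|_{op}^2\le 2n(1+\eta a_n)\lambda_1(\Sigma)$ after absorbing lower-order terms into $a_n=\sqrt{\tfrac{\tr\Sigma}{\lambda_1\Sigma}\tfrac{\log n}{cn}}$ (this is where the constant $c\ge\tfrac1{32}$ is pinned down). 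Substituting into the Step~2 display, then into the Step~1 display, and dividing through by $\|X\|_F^2$ gives \eqref{eq:main-result}.

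\textbf{Main obstacle.}
The conceptual crux is the Frobenius cancellation in Step~2: $M_{\Pi^*}$ and $M_{\hat\Pi}$ share the large norm $\|XR\|_F^2+\|X\|_F^2$, so a naive Weyl-type comparison of singular values would leave a term of order $\sigma_1^2(R)\|X\|_F^2$, which is vacuous after normalizing by $\|X\|_F^2$; routing the argument through $\sum_{i\le p}\sigma_i^2(Y_\Pi)$ rather than $\sum_{i>p}\sigma_i^2(Y_\Pi)$ is exactly what makes the dominant terms cancel. The technical crux is the last part of Step~3 — extracting the precise factor $(1+\eta a_n)$, the failure probability $n^{-\eta^2}$, and an admissible constant $c$ — which needs a sharp non-asymptotic operator-norm bound for anisotropic Gaussian matrices and disciplined control of the two cross terms through the trace/operator Hölder inequality; in particular a per-permutation Gaussian tail for $\langle M_{\hat\Pi},E_{\hat\Pi}\rangle$ is not affordable, since a union bound over the $n!$ permutations is too lossy, and this is why a $\sqrt n$-type (rather than $\sqrt{\log n}$-type) dependence surfaces in the cross term.
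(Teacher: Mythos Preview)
Your argument is correct and arrives at a bound of the stated form, but the route differs from the paper's in two places worth noting.

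\textbf{Step 2 versus the paper's Weyl argument.} The paper works directly with the tail sums $\sum_{i>p}\sigma_i^2(Y_\Pi)$ and applies Weyl's inequality to $Y_\Pi^TY_\Pi=M_\Pi^TM_\Pi+(M_\Pi^TE_\Pi+E_\Pi^TM_\Pi)+E_\Pi^TE_\Pi$ on both sides of the optimality inequality; the key observation there is that $\mathrm{rank}(M_{\Pi^*})=p$ forces $\sum_{i>p}\lambda_i(M_{\Pi^*}^TM_{\Pi^*})=0$, so the large signal term drops out just as it does in your Frobenius cancellation. Your ``main obstacle'' paragraph therefore slightly overstates the difficulty of a Weyl approach: the paper's Weyl route is not vacuous, because the rank constraint already kills the $\sigma_1^2(R)\|X\|_F^2$ term. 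That said, your projection-and-trace-H\"older argument is a genuine alternative and is in one respect sharper: the paper's Weyl step turns each cross term into $p\cdot\lambda_1(M_\Pi^TE_\Pi+E_\Pi^TM_\Pi)$, whereas your $|\langle M_\Pi,E_\Pi\rangle|\le\|M_\Pi\|_*\|E_\Pi\|_{op}\le\sqrt p\,\|M_\Pi\|_F\|E_\Pi\|_{op}$ saves a factor $\sqrt p$ in the cross contribution. To recover the exact constants of \eqref{eq:main-result} you would have to give this factor back, but your bound is otherwise of the same (or slightly better) shape.

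\textbf{Step 3 versus the paper's Lemma~\ref{lem:evals}.} The paper controls $\lambda_1(E_{\Pi^*}^TE_{\Pi^*})$ via the Gittens--Tropp tail bound, and then bounds the cross quantities $\|X^T\Pi E_j\|_F$ by $\|X\|_F\sqrt{\lambda_1(E_j^TE_j)}$; this is where the specific admissible constant $c\ge\tfrac1{32}$ originates. Your Chevet-plus-Gaussian-concentration route for $\|E_j\|_{op}$ is equally valid and also sidesteps any union bound over permutations (since $\|E_{\hat\Pi}\|_{op}\le\|E_1\|_{op}+\|E_2\|_{op}$ uniformly), but it would pin down a different numerical constant in $a_n$; so your parenthetical that ``this is where $c\ge\tfrac1{32}$ is pinned down'' is not quite right for your own argument, though some admissible $c$ certainly emerges. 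Step~1 of your proof is essentially identical to the paper's use of Lemma~\ref{lem:pql_svals} together with the multiplicative singular-value inequality for $\mathrm{diag}(R,I_p)$.
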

In the following paragraphs, we first discus where the two terms in the upper bound comes from. Then to aid in interpretation of the upper bound, we view it in terms of signal-to-noise ratio. And finally, we compare the upper bound to similar results in the shuffled linear regression setting.

We begin by loosely interpreting the sources of the two components in the upper bound without delving in to the specificity. The details on how each component is derived will be made clear in the proof section.

The first term in the upper bound,
$$\frac{2p}{\sigma_p^2(R)\|X\|_F^2}\lambda_1(\Sigma)16\sigma_1(R) \|X\|_F\sqrt{2n}, $$
comes from our bound on the interaction terms of $\hat{\Pi}X$ with the noise variables $E_1$, $E_2$.
The scale of this term depends on the norm of the latent design matrix $X$ relative to that of the sample size $n$, and the noise level indicated by $\lambda_1(\Sigma)$. 
If we assume that the noise level remains fixed and that the entries of $X$ are i.i.d. standard Gaussian, then we have  $\|X\|_F^2 = \Theta(np)$,%
for fixed $p$.
Then the first term is constant and approximately $32\sqrt{2p}\lambda_1(\Sigma)\frac{\sigma_1(R)}{\sigma_p^2(R)}$.

The second component in the upper bound then comes from our bound on the noise variables $E_1$ and $E_2$, specifically a bound on the largest eigenvalue of $E_{\Pi^*}^TE_{\Pi^*}$, i.e., $\lambda_1(E_{\Pi^*}^TE_{\Pi^*})$, where $E_{\Pi^*}=\begin{bmatrix}
    E_2|\Pi^*E_1
    \end{bmatrix}.$
When $\|X\|_F^2 = \Theta(np)$, the second term becomes $$\frac{4a_n\eta\lambda_1(\Sigma)}{\sigma^2_p(R)}$$
and goes to zero like $\sqrt{\frac{\log n}{n}}$ as the sample size goes to infinity. 

To further interpretation, in the context of our model, we define the signal-to-noise ratio to be $\mathrm{snr}=\frac{\|X\|_F^2}{n\tr(\Sigma)}$.
To simplify the notation, we replace $\tr(\Sigma)$ with the upper bound $p\lambda_1(\Sigma)$.

Then $a_n$ can be written as
$a_n = \sqrt{\frac{p\log(n)}{c n}}$,
and accordingly, the first term in the upper bound can be written as 
$c_1(R)\sqrt{\frac{\tr(\Sigma)}{\text{snr}}}$
where $c_1(R)=\frac{32\sqrt{2}\sigma_1(R)}{\sigma_p^2(R)}$.
Similarly, the second term in the upper bound can be written as $c_2(R)\sqrt{\frac{p\log(n)}{ n}}\frac{1}{\text{snr}}$,
where $c_2(R)=\frac{4\eta}{\sigma_p^2(R)}$.
Putting the two terms together, the upper bound can be viewed in terms of signal-to-noise ratio as
\begin{align*}
c_1(R)\sqrt{\frac{\tr(\Sigma)}{\text{snr}}}+c_2(R)\sqrt{\frac{p\log(n)}{ n}}\frac{1}{\text{snr}}.
\end{align*}

As before, the first term is the dominant term for fixed $p$. Therefore, in order to have the normalized Procrustes quadratic loss of the estimator go to zero, it is required that the signal-to-noise ratio goes to infinity. 

Finally, recall that assumption \ref{Assumption:2}, $\sigma_p(R)\le 1\ge \sigma_1(R)$, is for notational parsimony only
The main result \eqref{eq:main-result} can be extended to $R$ with arbitrary singular values by replacing $\sigma_p(R)$ with $\min\{1,\sigma_p(R)\}$, and $\sigma_1(R)$ with $\max\{1,\sigma_1(R)\}$. The details are stated in the proof section.

To view our upper bound in a broader light, we compare our result to some of the existing results in the shuffled linear regression setting, i.e., when $X$ is observed without noise.
As introduced in section \ref{s:background}, \citep{PWC16} considered the exact permutation recovery problem in the one-dimensional shuffled linear regression setting. They established that if the signal-to-noise ratio scales as $n$ then exact permutation recovery is achieved with high probability, that is, the probability of the event $\{\hat{\Pi}_{OLS}\ne \Pi^*\}$ goes to zero as the sample size goes to infinity. 
Though the model setting for shuffled linear regression is in some ways simpler than our model setting, the goal of exact permutation recovery is more difficult and hence requires a much more strict signal-to-noise ratio condition.

Continue their work in the shuffled linear regression, the same group of authors in \citep{PWC17} considered the permutation recovery problem in the higher dimensional setting where they do not evaluate the estimator in terms of exact recovery, but instead consider a quantity similar to ours which is the normalized quadratic loss written as $\frac{1}{np}\|\hat{\Pi}X\hat{R}-\Pi^*XR^*\|_F^2$.
Given that $p\le \log n$, their bound has a leading term which is the noise variance $\sigma^2$, which agrees with our upper bound.

Another closely related problem is the statistical seriation context as presented in section \ref{s:intro}.
Stated in our notation, \citet{FMR16}  derived an upper bound on $\frac{\|\Pi^*XR-\hat{\Pi}\hat{X}\hat{R}\|_F^2}{np}$  where $(\hat{\Pi},\hat{X}\hat{R})$ are derived through the ordinary least-squares method. They established that, for fixed $p$, the normalized quadratic loss has an upper bound with a leading term $\log(n)$. This might seem a weaker result compared with that of \citep{PWC17} as well as ours, but they are estimating $XR$ having only observed a single set of points so it is unsurprising that a larger sample size increases the difficulty for this problem.

\section{Methods and Simulations} \label{s:sim}

\subsection{Alternating LAP/TLS Algorithm}
We have been analyzing the theoretical properties of the estimator given by \eqref{eq:tls-estimator}, but since the solution involves a combinatorial optimization over $n!$ possible permutations, $\hat{\Pi}$ in general cannot be computed effiently.
In this section, we propose a simple and efficient algorithm, alternating LAP/TLS algorithm (ALTA), to approximate $\hat{\Pi}$, and use it to empirically examine our main result. The main idea of the algorithm is to alternate between estimating $(\hat{X},\hat{R})$ using the total least-squares (TLS) method and estimating $\hat{\Pi}$ by solving a linear assignment problem (LAP).

To motivate this approach, we first introduce a related algorithm, Alternating LAP/OLS algorithm (ALOA) for the case when $X$ is observed without noise.
Given an estimate for $\hat{R}$, in ALOA one estimates $\Pi^*$ as $\argmin_{\Pi\in \mathcal{P}_n}\|Y - \Pi X\hat{R}\|$.
Note this is simply a linear assignment problem with cost matrix given by $C$ where $C_{ij} = d(Y_{i}, (X\hat{R})_{j})$.
Given an estimate for $\Pi$, $\hat{R}$ can estimated using OLS.
The ALOA algorithm consists of alternating between these two steps.

For ALTA, given $\hat{\Pi}$, we can find $(\hat{X},\hat{R})$ using the total least squares method introduced in section \ref{Section: 2.1}.
Note, the ALTA algorithm is initialized at some permutation matrix $\Pi$ so the first step is to compute $\hat{X},\hat{R}$.

To find $\hat{\Pi}$ given $(\hat{X},\hat{R})$ is less well posed.
Indeed, recall that $\hat{X}, \hat{R}$ are not in the original formulation for the estimate $\hat{\Pi}$ given by Eq.~\eqref{eq:tls-estimator}.
Hence, we will use $(\hat{X},\hat{R})$ heuristically to estimate $\Pi^*$.
We propose solving a linear assignment problem based on a cost derived from $\hat{X}$ and $\hat{R}$.
The ALTA algorithm then iterates between the TLS-step to estimate $(\hat{X},\hat{R})$ and the LAP-step to estimate $\hat{\Pi}$ until convergence.
o specify the entries of the cost matrices in the LAP-step, since we have $Y_2=\Pi^*XR + E_2$, given $(\hat{X},\hat{R})$, we could consider a cost matrix, call it $C1$, taking it's $(i,j)$th entry as 
$$C1_{ij}=d(Y_{2i},\hat{X}\hat{R}_{j}) =\|Y_{2i}-\hat{X}\hat{R}_{j}\|_F^2.$$

On the other hand, since model \eqref{eq:tls-model} is equivalent to
\begin{align*}
    Y_1 &= \Pi^{*T}X + E_1\\
    Y_2 &= XR +E_2,
\end{align*}
we can estimate $\hat{\Pi}$ alternatively by solving the linear assignment problem between $\hat{X}$ and $Y_1$ with a cost matrix $C2$ given by
$$C2_{ij}=d(\hat{X}_i,Y_{1j})=\|\hat{X}_i-Y_{1j}\|_F^2.$$

The cost matrix $C3$ we arrive at is a combination of $C1$ and $C2$, and has it's $(i,j)$th entry being
$$C3_{ij}=d(Y_{2i},\hat{X}\hat{R}_{j}) + d(\hat{X}_i,Y_{1j}).$$

Note, $C1$, $C2$, and correspondingly $C3$ use both $\hat{X}$ and $\hat{R}$ from the TLS step.
From a different perspective, we could consider using only the $\hat{R}$ and re-estimate $X$ when computing the cost matrix.
We denote this as cost matrix $C4$ with entries
$$C4_{ij}=\min_{x\in\mathbb{R}^d}\|Y_{2i}-\hat{R}^Tx\|_F^2+\|Y_{1j}-x\|_F^2,$$
which represents the best estimation error for $x$ given $\hat{R}$ and $\hat{\Pi}_{ij}=1$.

\subsection{Simulation Studies}
Consider model \eqref{eq:tls-model} and assume that $\Pi^*=I_n$. We set $n=300$, $p=2$, and the observations $Y_1 = X + E_1$ and $Y_2 = \Pi^*XR + E_2$ are generated as follows.

The entries of the noise variables $E_1$ and $E_2$ are sampled from i.i.d. Gaussian distribution with mean zero and standard deviation $\sigma = 0.2$. As to the latent design matrix $X$, in order to meet the condition number requirement, $\kappa(X) = 1$, we first generate $X^\dagger$ with $X^\dagger_i\stackrel{i.i.d.}{\sim}N(0,I_p)$, letting $X^\dagger=USV^T$, we then take $X$ to be $X=\sqrt{np}\frac{U}{\|U\|_F}$.
This way, we have ensured that $\kappa(X)=\kappa(U)=1$ as well as $\|X\|_F=E(\|X^\dagger\|_F)=\sqrt{np}$. 
We have taken $R$ to be a 60 degree rotation matrix, and we initiate the algorithm by setting both the permutation matrix and the coefficient matrix to be the identity.

For the first 3 figures below, each algorithm is initialized at $\Pi^* = I_n$.
While this is unrealistic in practice, initializing at the truth is useful for investigating the thoretical properties of our estimator since this initialization increases the likelihood that we find an estimator that minimizes the sum of the least $p$ squared singular values.

In the following figures, unless otherwise stated, the vertical axis represents the estimation error measured by the normalized Procrustes quadratic loss, and the sample mean is averaged over 10 replications. The lines are produced by connecting the sample means and the confidence bars indicate the $(25\%, 75\%)$ confidence interval.

\begin{figure}
\centering
 \includegraphics[width=.666\linewidth]{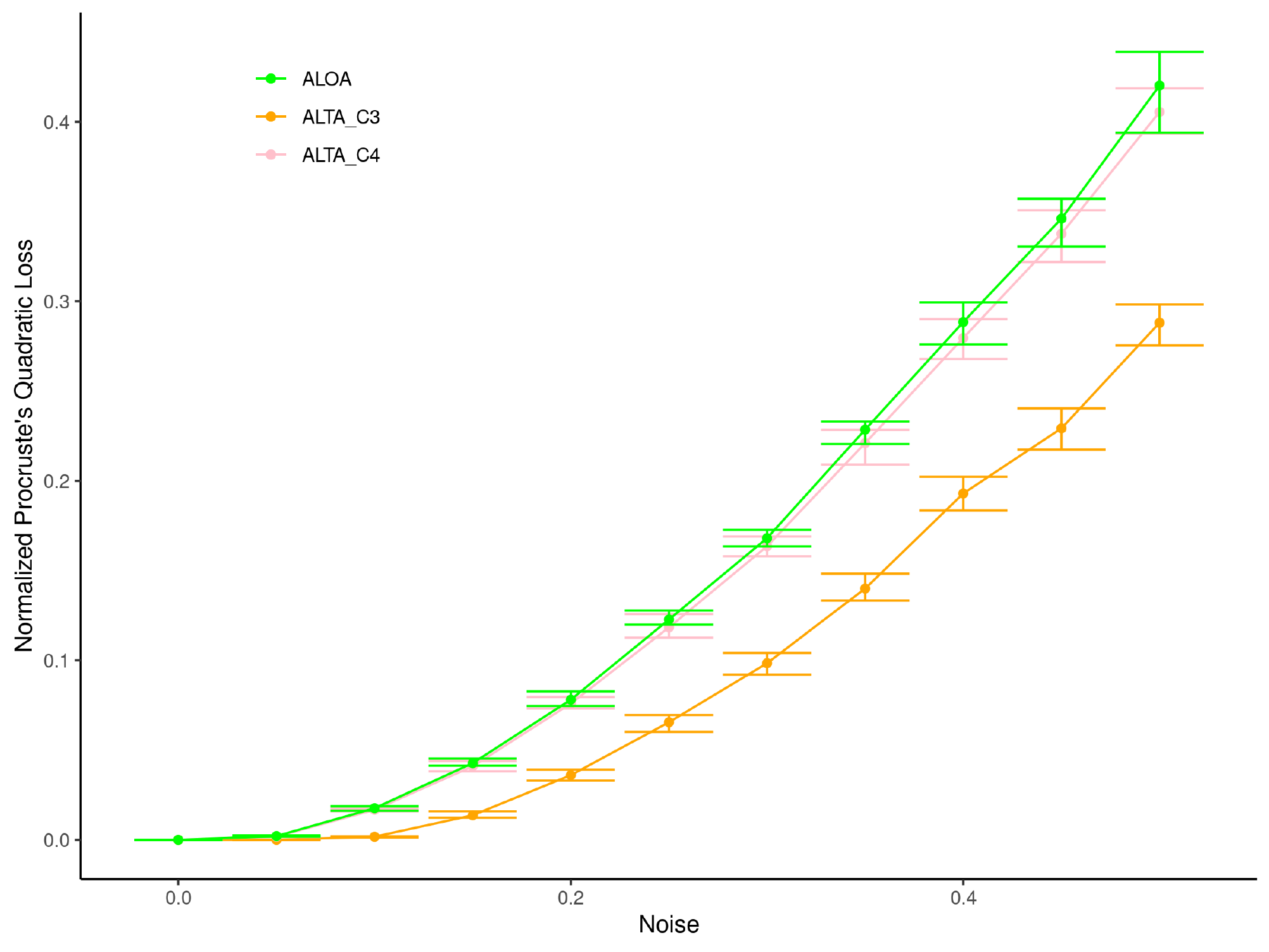}
  \caption{Performance comparison between different cost matrices for the ALTA-based method, as noise increases.}
  \label{fig:noise}
\end{figure}

As shown in Figure~\ref{fig:noise}, when we vary the noise level $\sigma$, the ALTA algorithm with cost matrix $C3$ has the best overall performance in terms of the normalized Procrustes quadratic loss. 
Note, when the true permutation matrix $\Pi^*$ is the identity, ALTA with cost matrix $C1$, $C2$, and $C3$, (in fact, also with ALOA), will have a good initial estimation for $(\hat{X},\hat{R})$, in comparison to when we initiate it at some random permutation matrix.

This is because in the TLS-step of the algorithm, which serves to estimate $(\hat{X},\hat{R})$, the initiation happens to be at the truth.
This is not so for ALTA\_C4 since it only utilizes $\hat{R}$ from the TLS-step, and re-estimate $(\hat{X},\hat{\Pi})$ in the LAP-step, resulting in a comparatively worse initial estimation.
We will examine the behavior of the methods when the true permutation matrix is not of the identity at the end of this section.
But for the purpose of evaluating the difference in performance between the ALTA algorithm and the ALOA algorithm, and to examine the upper abound in theorem 1, it is of reason and convenience to assume $\Pi^*=I$, and use ALTA\_C3 as the best algorithm to approximate $\Pi^*$.

Now to compare the performance between ALTA\_C3 and ALOA, we see that when there the noise is small, $\sigma<0.05$, both algorithms succeed in estimating the true permutation matrix in terms of the normalized Procruste's quadratic loss. However, as the noise increases, the underlying model that generates the observations deviates more and more from the OLS setting.
Therefore, as expected, the ALOA method performs increasingly worse in comparison with the ALTA\_C3 method. Hence, in real-world applications, ignoring the noise from one data source may result in poor performance in the recovery of the true permutation matrix.

\begin{figure}
\centering
\includegraphics[width=.666\linewidth]{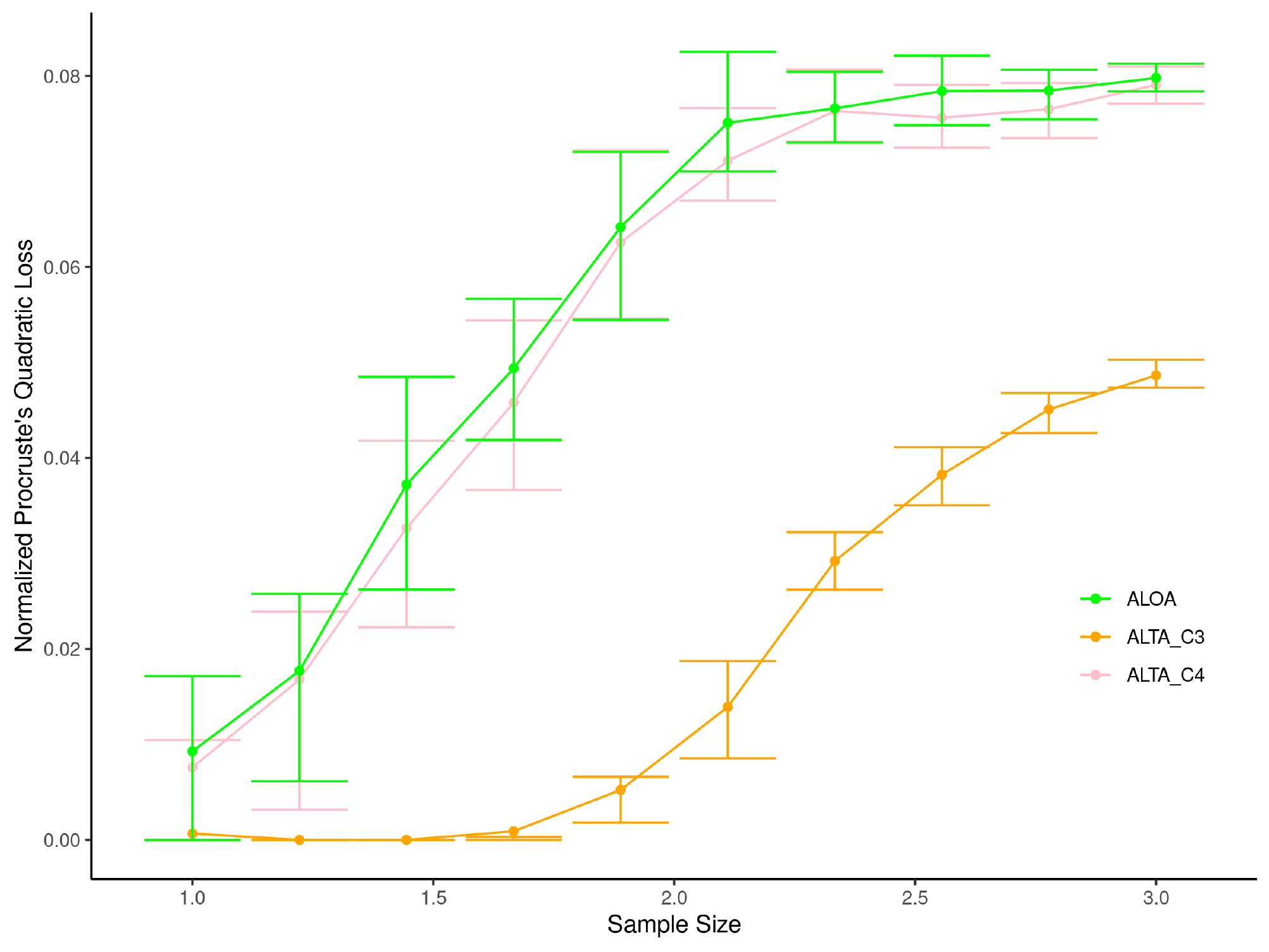}
  \caption{Performance comparison between ALTA and ALOA as sample size increases, with fixed noise.}
  \label{fig:sample_size}
\end{figure}
Figure \ref{fig:sample_size} compares the algorithms as the number of observations increases with a fixed noise level at $\sigma=0.2$. The horizontal axis represents 10 equally spaced sample size on the base-10 logarithmic scale. The sample mean is averaged over 30 Monte Carlo simulations.
Performance wise, it is as expected that ALTA\_C3 is still the best performing method across all sample sizes. ALTA\_C4 and ALOA resemble each other in performance likely due to the fact that in ALTA\_C4, the estimation of $\hat{X}$ is by nature utilizing the ordinary least-squares method.

\begin{figure}
\centering
\includegraphics[width=.666\linewidth]{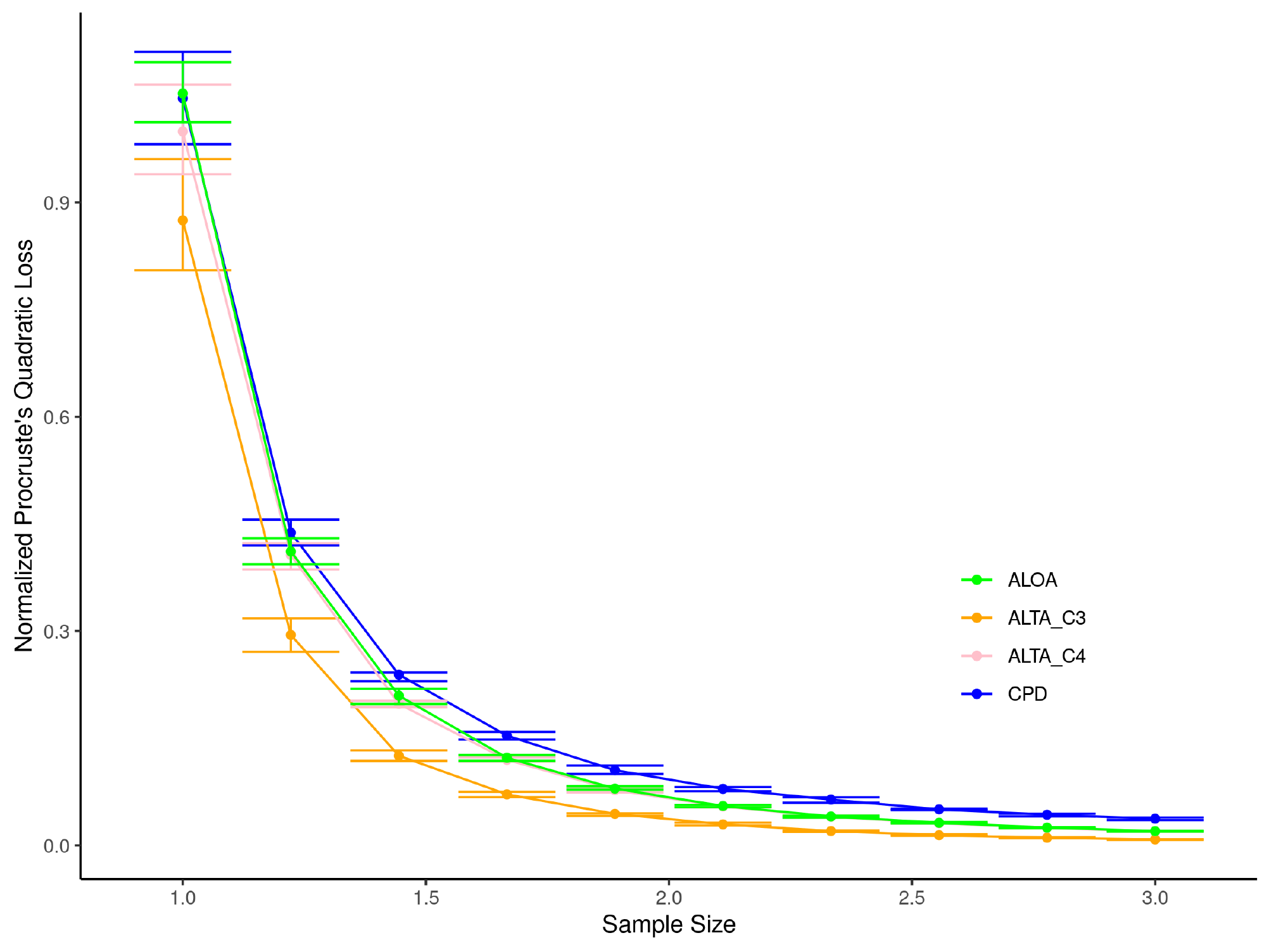}
  \caption{Performance comparison between ALTA, ALOA, and CPD as the signal-to-noise ratio increases.}
  \label{fig:snr}
\end{figure}
In figure \ref{fig:snr}, we compare the performance of the algorithms as we increase the signal-to-noise ratio via decreasing the noise level $\sigma$ in a $1/n$ fashion. We have compared the ALTA-based methods with both ALOA and a commonly used algorithm, and has demonstrated robust performances, known as coherent point drift (CPD), \citep{MS10}, which estimates $\Pi^*$ by treating one point set as the Gaussian center and the other as the data generated from them, and recover the correspondence using an EM-based procedure. As indicated by the theorem, we expect the normalized Procrustes quadratic loss to decrease and approaches zero as we increase the signal-to-noise ratio, which is just the behavior we have observed in the above figure. 

\begin{figure}
\centering
 \includegraphics[width=.666\linewidth]{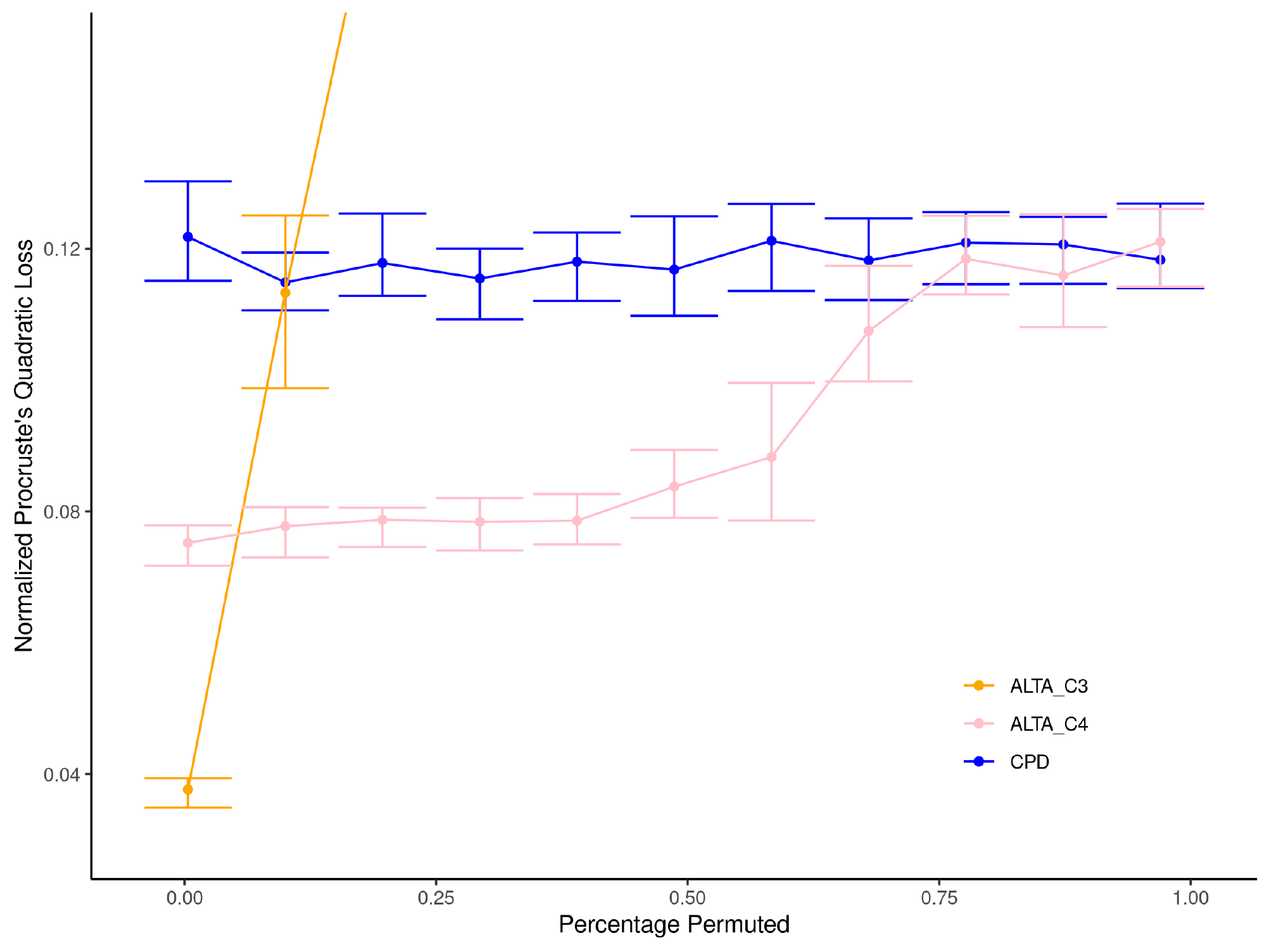}
  \caption{Performance comparison between ALTA-based methods versus CPD as the number of permutation increases.}
  \label{fig:permutation}
\end{figure}
In the last part of our simulation studies, we explore the behavior of the algorithms with more realistic initializations.
Figure~\ref{fig:permutation} compares the performance of ALTA as the relative Hamming distance between the initial permutation and the true permutation increases. We achieve this by setting the true permutation matrix as the identity matrix and increasing the number of top rows being randomly permuted.

When the percentage being permuted is very small, ALTA with cost matrix $C3$ performs better than ALTA\_C4, but such advantage quickly vanishes as the number of permutations increases.
This is because $C3$ ulitize the estimates $(\hat{X},\hat{R})$ from the TLS-step, as a consequence which they are more prone to be influenced by the quality of the initial permutation matrix.
On the other hand, ALTA with cost matrix $C4$ uses only $\hat{R}$ from the TLS-step and re-estimates
$(\hat{X},\hat{\Pi})$ in the LAP-step, therefore, it is comparatively more robust to the initiation. 

The CPD algorithm does not depend on the initiation and therefore remains robust the the percentage being permuted. However, ALTA\_C4 still outperforms CPD, especially when the percentage being permuted is less that $75\%$.

\section{Discussion}

We have analyzed the problem of permutation estimation in the total least squares setting and provided an upper bound on the normalized Procrustes quadratic loss for a total least squares estimator.
Moreover, we proposed a computationally efficient algorithm called LAP\_TLS to approximate the TLS estimator and used it to examine the upper bound empirically.
It is worth pointing out that the upper bound on the Procrustes quadratic loss that we observed in simulation are somewhat tighter than those provided by the theory.
The source of this gap might come from the following areas and which may represent valuable lines of inquiry in an effort to improve the bounds.

First, in the proof, we have employed a union bound over all the permutation matrices which resulted in us having $n^k$ permutation matrices to be unioned over.
However, we suspect that the number of permutation matrices could be much less than this since the set of $\Pi$ that satisfies equation \eqref{eq:5.3} should be a small set.
Secondly, to translate the inequality of \eqref{eq:5.5} to \eqref{eq:5.8}, we have used the Weyl's theorem which connects the sum of eigenvalues to the extreme eigenvalues.
Other bounds such as the Ky-Fan inequality, relate sums of eigenvalues to sums of eigenvalues and could tighten the bound.
In practice, this could improve the theoretical bounds in cases where the sum of eigenvalues is much less than $p$ times the largest eigenvalue.
This is likely in high-dimensional settings which are outside of the scope of this work.
The last area one might improve the bounds comes from inequality \eqref{eq:5.8} to \eqref{eq:5.9}, where we have dropped the term $\lambda_{2p}(E_{\hat{\Pi}}^TE_{\hat{\Pi}})$ due to its positivity.
If one considers the condition number of $E_{\hat{\Pi}}^TE_{\hat{\Pi}}$ which connects its minimum eigenvalue to its largest, the term $\lambda_{1}(E_{\Pi^*}^TE_{\Pi^*})-\lambda_{2p}(E_{\hat{\Pi}}^TE_{\hat{\Pi}})$ could also be made smaller.

With regard to the model assumptions, there are several extensions worth exploring.
We have assumed that the model is homoscedastic, but the case when the covariance of each observation depends on the latent design matrix, that is, the case of heteroscedasticity, is also of great interest.
Furthermore, considering the application of these results to the matching of two graphs, it is likely that the two graphs observed are correlated to some extent and that the variances of the estimates from the two graphs and across different vertices are not of the same.
That is, we might have $E_1$ and $E_2$ arising from non-identical heteroscedastic Gaussian distributions that are correlated.
Lastly, the observations, assumed to be independent, might in reality have some dependencies in them.

In the theorem, the property of the total least squares estimator is examined, but in practice, however, it is of no small difficulty to evaluate it computationally because it requires a combinatorial minimization over $n!$ permutations to find the TLS estimator.
So far no approaches can reliably approximate $\hat{\Pi}$, and therefore it is of great practical importance to design efficient algorithms to approximate $\hat{\Pi}$.
It will also be useful to better study the theoretical properties of the ALTA and ALOA approaches in terms of what can be said about the solutions they do attain.

Finally, we have not explored whether our results achieve or nearly achieve the minimax rate for this problem.
While this is the case, since our bounds compare very favorably to the bounds in the easier case of shuffled OLS, it is reasonable to conclude that our results are likely near the minimax rate for the shuffled TLS problem.

\section*{Acknowledgments}
This material is based on research sponsored by the Air Force Research Laboratory and Defense Advanced Research Projects Agency (DARPA) under agreement number FA8750-20-2-1001. The U.S. Government is authorized to reproduce and distribute reprints for Governmental purposes notwithstanding any copyright notation thereon. The views and conclusions contained herein are those of the authors and should not be interpreted as necessarily representing the official policies or endorsements, either expressed or implied, of the Air Force Research Laboratory and DARPA or the U.S. Government.

\bibliographystyle{plainnat}
\bibliography{references.bib}

\begin{thebibliography}{41}
\providecommand{\natexlab}[1]{#1}
\providecommand{\url}[1]{\texttt{#1}}
\expandafter\ifx\csname urlstyle\endcsname\relax
  \providecommand{\doi}[1]{doi: #1}\else
  \providecommand{\doi}{doi: \begingroup \urlstyle{rm}\Url}\fi

\bibitem[Abid et~al.(2017)Abid, Poon, and Zou.]{APZ17}
A.~Abid, A.~Poon, and J.~Zou.
\newblock \emph{Linear regression with shuffled labels.}
\newblock [Online]. Available: \url{https://arxiv.org/abs/1705.01342}, 2017.

\bibitem[Athreya et~al.(2016)Athreya, Priebe, Tang, Lyzinski, Marchette, and
  Sussman]{Athreya16}
A~Athreya, C~E Priebe, M~Tang, V~Lyzinski, D~J Marchette, and D~L Sussman.
\newblock \emph{A Limit Theorem for Scaled Eigenvectors of Random Dot Product
  Graphs}.
\newblock Sankhya A, 2016.

\bibitem[Bepler and Berger(2019)]{BB19}
T.~Bepler and B.~Berger.
\newblock \emph{Learning protein sequence embeddings using information from
  structure.}
\newblock Seventh International Conference on Learning Representations, 2019.

\bibitem[Chen et~al.(2019)Chen, Xie, and et~al]{CX19}
C.~Chen, W.~Xie, and et~al.
\newblock \emph{Unsupervised Adversarial Graph Alignment with Graph Embedding}.
\newblock In: arXiv preprint arXiv:1907.00544, 2019.

\bibitem[Chen et~al.(2016)Chen, Vogelstein, and Priebe]{Chen}
L.~Chen, V.~Vogelstein, J. T. annd~Lyzinski, and C.~E. Priebe.
\newblock A joint graph inference case study: the c. elegans chemical and
  electrical connectomes.
\newblock \emph{Worm}, 5, 2016.

\bibitem[Chen et~al.(2017)Chen, Tian, Yang, , and Zaniolo]{CT17}
M.~Chen, Y.~Tian, M.~Yang, , and C.~Zaniolo.
\newblock \emph{Multilingual Knowledge Graph Embeddings for Cross-lingual
  Knowledge Alignment}.
\newblock in Proc. 7th Int. Joint Conf. Artif. Intell., 2017, pp. 1511– 1517,
  2017.

\bibitem[Chu et~al.(2019)Chu, Fan, Yao, Zhu, Huang, and Bi]{Chu2019-yv}
Xiaokai Chu, Xinxin Fan, Di~Yao, Zhihua Zhu, Jianhui Huang, and Jingping Bi.
\newblock {Cross-Network} embedding for {Multi-Network} alignment.
\newblock In \emph{The World Wide Web Conference}, WWW '19, pages 273--284, New
  York, NY, USA, May 2019. Association for Computing Machinery.

\bibitem[Collier and Dalalyan(2016)]{CD16}
O.~Collier and A.~S. Dalalyan.
\newblock \emph{{Minimax rates in permutation estimation for feature
  matching}}.
\newblock J. Mach. Learn. Res., vol. 17, no. 1, pp. 162–192, 2016.

\bibitem[Coulmance et~al.(2015)Coulmance, Marty, Wenzek, and Benhalloum]{CM15}
J.~Coulmance, J.-M. Marty, G.~Wenzek, and A.~Benhalloum.
\newblock \emph{Trans-gram, Fast Cross-lingual Word-embeddings}.
\newblock InProceedings of EMNLP, pp. 1109–1113, 2015.

\bibitem[Dai et~al.(2019)Dai, Cullina, and Kiyavash]{Dai2019-on}
Osman~E Dai, Daniel Cullina, and Negar Kiyavash.
\newblock Database alignment with gaussian features.
\newblock In Kamalika Chaudhuri and Masashi Sugiyama, editors,
  \emph{Proceedings of the {Twenty-Second} International Conference on
  Artificial Intelligence and Statistics}, volume~89 of \emph{Proceedings of
  Machine Learning Research}, pages 3225--3233. PMLR, 2019.

\bibitem[Dai et~al.(2020)Dai, Cullina, and Kiyavash]{Dai2020-dh}
Osman~Emre Dai, Daniel Cullina, and Negar Kiyavash.
\newblock Achievability of nearly-exact alignment for correlated gaussian
  databases.
\newblock In \emph{2020 {IEEE} International Symposium on Information Theory
  ({ISIT})}, pages 1230--1235, June 2020.

\bibitem[David et~al.(2004)David, DeMenthon, Duraiswami, and
  Samet]{David2004-dd}
Philip David, Daniel DeMenthon, Ramani Duraiswami, and Hanan Samet.
\newblock {SoftPOSIT}: Simultaneous pose and correspondence determination,
  2004.

\bibitem[Eckart and Young(1936)]{EY36}
G.~Eckart and G.~Young.
\newblock \emph{{The approximation of one matrix byanother of lower rank}}.
\newblock Psychometrika 1 (1936) 211–218, 1936.

\bibitem[Elmsallati et~al.(2016)Elmsallati, Clark, and Kalita]{Elmsallati}
A.~Elmsallati, C.~Clark, and J.~Kalita.
\newblock Global alignment of protein-protein interaction networks: a survey.
\newblock \emph{IEEE/ACM Trans. Comput. Biol. Bioinform.}, 13:\penalty0
  689–705, 2016.

\bibitem[Fan~J. et~al.(2019)Fan~J., I., T., T., M., and Hescott~B.]{Fan19}
Cannistra~A. Fan~J., Fried I., Lim T., Schaffner T., Crovella M., and
  Leiserson~M.D.M. Hescott~B.
\newblock \emph{Functional protein representations from biological networks
  enable diverse cross-species inference}.
\newblock Nucleic Acids Res., 47 (9) (2019), Article e51, 2019.

\bibitem[Flammarion et~al.(2019)Flammarion, Mao, and Rigollet]{FMR16}
Nicolas Flammarion, Cheng Mao, and Philippe Rigollet.
\newblock \emph{Optimal rates of statistical seriation}, volume~25.
\newblock Bernoulli Society for Mathematical Statistics and Probability,
  February 2019.

\bibitem[Gittens and Tropp.(2014)]{GT14}
A.~Gittens and J.~A. Tropp.
\newblock \emph{Tail bounds for all eigenvalues of a sum of random matrices.}
\newblock ACM Report 2014-02, California Inst. Tech., 2014. Available at
  \url{http://arXiv.org/abs/1104.4513.}, 2014.

\bibitem[Golub and Loan(1980)]{GL80}
G.~Golub and C.~Van Loan.
\newblock \emph{{An analysis of the total leastsquares problem}}.
\newblock SIAM J. Numer. Anal. 17 (1980) 883–893, 1980.

\bibitem[Hartley and Zisserman(2003)]{Hartley}
Richard Hartley and Andrew Zisserman.
\newblock Multiple view geometry in computer vision.
\newblock \emph{Cambridge University Press, Cambridge, second edition}, 2003.

\bibitem[Heimann et~al.(2018)Heimann, Shen, Safavi, and Koutra]{Heimann2018-ci}
Mark Heimann, Haoming Shen, Tara Safavi, and Danai Koutra.
\newblock {REGAL}: Representation learning-based graph alignment.
\newblock In \emph{Proceedings of the 27th {ACM} International Conference on
  Information and Knowledge Management}, CIKM '18, pages 117--126, New York,
  NY, USA, October 2018. Association for Computing Machinery.

\bibitem[Hirakawa and Parks(2006)]{HP06}
K.~Hirakawa and T.~W. Parks.
\newblock \emph{Image denoising using total least squares}.
\newblock IEEE Transactions on Image Processing, 15(9), 2730– 2742, 2006.

\bibitem[Hodges and Moore(1972)]{HM72}
S.~D. Hodges and P.~G. Moore.
\newblock \emph{Data Uncertainties and Least Squares Regression.}
\newblock Journal of the Royal Statistical Society. Series C (Applied
  Statistics) Vol. 21, No. 2 (1972), pp. 185-195 (11 pages), 1972.

\bibitem[Horn and Johnson(1987)]{HJ87}
R.~A. Horn and C.~R. Johnson.
\newblock \emph{Matrix Analysis}.
\newblock 1987.

\bibitem[Horn and Johnson(1991)]{HJ91}
R.~A. Horn and C.~R. Johnson.
\newblock \emph{Topics in Matrix Analysis, p178}.
\newblock 1991.

\bibitem[Kendall(1963)]{Ken63}
David~G. Kendall.
\newblock A statistical approach to flinders petrie’s sequence-dating.
\newblock \emph{Bull. Inst. Internat. Statist.}, 40:\penalty0 657--681, 1963.

\bibitem[Kendall(1969)]{Ken69}
David~G. Kendall.
\newblock Incidence matrices, interval graphs and seriation in archeology.
\newblock \emph{Pacific J. Math.}, 28:\penalty0 565--570, 1969.

\bibitem[Kendall(1970)]{Ken70}
David~G. Kendall.
\newblock A mathematical approach to seriation.
\newblock \emph{Philosophical Transactions of the Royal Society of London.
  Series A, Mathematical and Physical Sciences}, 269(1193):\penalty0 125–134,
  1970.

\bibitem[Klau(2009)]{Klau}
G.~W. Klau.
\newblock A new graph-based method for pairwise global network alignment.
\newblock \emph{BMC Bioinform.}, 10:\penalty0 S59, 2009.

\bibitem[L.~Keller et~al.(2009)L.~Keller, C.~Fragouli, and Diggavi]{KSFAD09}
M.~J.~Siavoshani L.~Keller, K.~Argyraki C.~Fragouli, and S.~Diggavi.
\newblock \emph{Identity aware sensor networks}.
\newblock Proc. IEEE INFOCOM, Apr. 2009, pp. 2177–2185, 2009.

\bibitem[Liu et~al.(2020)Liu, Li, Cheung, and Liao]{Liu2020-av}
Li~Liu, Xin Li, William~K Cheung, and Lejian Liao.
\newblock Structural representation learning for user alignment across social
  networks.
\newblock \emph{IEEE transactions on knowledge and data engineering},
  32\penalty0 (9):\penalty0 1824--1837, September 2020.

\bibitem[Markovsky and Huffel(2007)]{MH07}
I.~Markovsky and S.~Van Huffel.
\newblock \emph{{Overview of total least-squares methods}}.
\newblock Signal Process, vol. 87, no. 10, pp. 2283-2302, 2007.

\bibitem[Myronenko and Song(2010)]{MS10}
A.~Myronenko and X.~Song.
\newblock \emph{Point-Set Registration: Coherent Point Drift}.
\newblock IEEE Trans. Pattern Analysis and Machine Intelligence, vol. 32, no.
  12, pp. 2262-2275, Dec., 2010.

\bibitem[Nelson et~al.(2019)Nelson, Zitnik, Wang, Leskovec, Goldenberg, and
  Sharan]{Nelson2019-zc}
Walter Nelson, Marinka Zitnik, Bo~Wang, Jure Leskovec, Anna Goldenberg, and
  Roded Sharan.
\newblock To embed or not: Network embedding as a paradigm in computational
  biology.
\newblock \emph{Frontiers in genetics}, 10:\penalty0 381, May 2019.

\bibitem[Pananjady et~al.(2016)Pananjady, Wainwright, and Courtade]{PWC16}
Ashwin Pananjady, Martin~J Wainwright, and Thomas~A Courtade.
\newblock \emph{{Linear regression with an unknown permutation: Statistical and
  computational limits}}.
\newblock 2016 54th Annual Allerton Conference on Communication, Control, and
  Computing (Allerton), pages 417–424, 2016.

\bibitem[Pananjady et~al.(2017)Pananjady, Wainwright, and Courtade]{PWC17}
Ashwin Pananjady, Martin~J Wainwright, and Thomas~A Courtade.
\newblock \emph{{Denoising linear models with permuted data}}.
\newblock Information Theory (ISIT), 2017 IEEE International Symposium on,
  pages 446–450. IEEE, 2017.

\bibitem[Petrie(1899)]{Pet99}
W.~M.~Flinders Petrie.
\newblock Sequences in prehistoric remains.
\newblock \emph{The Journal of the Anthropological Institute of Great Britain
  and Ireland}, 29:\penalty0 259--301, 1899.

\bibitem[Robinson(1951)]{R51}
W.~S. Robinson.
\newblock \emph{A method for chronologically ordering archaeological deposits}.
\newblock 1951.

\bibitem[Sun et~al.(2020)Sun, Zhang, Hu, Wang, Chen, Akrami, and
  Li]{Sun2020-sm}
Zequn Sun, Qingheng Zhang, Wei Hu, Chengming Wang, Muhao Chen, Farahnaz Akrami,
  and Chengkai Li.
\newblock A benchmarking study of embedding-based entity alignment for
  knowledge graphs.
\newblock March 2020.

\bibitem[Sussman et~al.(2014)Sussman, Tang, and Priebe]{Sussman14}
Daniel~L Sussman, Minh Tang, and Carey~E Priebe.
\newblock \emph{Consistent latent position estimation and vertex classification
  for random dot product graphs}.
\newblock IEEE transactions on pattern analysis and machine intelligence, 2014.

\bibitem[Szeliski(2010)]{Szeliski}
Richard Szeliski.
\newblock \emph{Computer Vision: Algorithms and Applications}.
\newblock Springer Science \& Business Media, September 2010.

\bibitem[Zaslavskiy et~al.(2009)Zaslavskiy, Bach, and Vert]{Zaslavskiy}
M.~Zaslavskiy, F.~Bach, and J.~P. Vert.
\newblock A path following algorithm for the graph matching problem.
\newblock \emph{IEEE Trans. Pattern Anal. Mach. Intell.}, 31:\penalty0
  2227–2242, 2009.

\end{thebibliography}

\appendix

\section{Proof of Theorem~\ref{Theorem: 1}}
\label{Section: 5}

We begin by giving a high-level overview of the proof. The proof consists of two aspects. From the definition of the total least squares estimator, we know that $\sum_{i=p+1}^{2p}\sigma_i^2(Y_{\hat{\Pi}}) \le\sum_{i=p+1}^{2p}\sigma_i^2(Y_{\Pi^*})$ since $\hat{\Pi}$ is the minimizer among all possible permutation matrices. Using Weyl's theorem which provides both lower and upper bounds on the sum of eigenvalues of Hermitian matrices, we can derive an upper bound on the sum of the least $p$ eigenvalue of $M_{\hat{\Pi}}^TM_{\hat{\Pi}}$, where $M_{\hat{\Pi}}=[\Pi^*XR|\hat{\Pi}X]$, in terms of the Frobenius norm of $X$ and the largest eigenvalue of the error matrix, $\lambda_1(E_{\Pi^*}^TE_{\Pi^*})$, a quantity can be bounded using results from \citep{GT14} on tail bounds on the extreme eigenvalues. The second aspect of the proof upper bound our quantity of interest, that is, the Procrustes quadratic loss by the sum of the least $p$ eigenvalues of $M_{\hat{\Pi}}^TM_{\hat{\Pi}}$, and therefore passing the upper bound on the latter quantity to the former. 

Specifically, for $\Pi \in \mathcal{P}$, let
\begin{align}
\label{eq:5.2}
    Y_{ \Pi } = M_{ \Pi } + E_{ \Pi },
\end{align}
where $Y_{ \Pi } = [Y_2| \Pi  Y_1],M_{ \Pi }=[\Pi^*XR| \Pi  X],E_{ \Pi }=[E_2| \Pi  E_1]$.
By the definition of the shuffled total least-squares estimator \eqref{eq:tls-estimator}, $\hat{\Pi}$ minimizes the sum of the least $p$ squared singular values of $Y_\Pi$ among all possible $\Pi\in \mathcal{P}$, which implies that
\begin{align}
\label{eq:5.3}
    &\sum_{i=p+1}^{2p}\sigma_i^2(Y_{\hat{\Pi}}) \le\sum_{i=p+1}^{2p}\sigma_i^2(Y_{\Pi^*}),
\end{align}
which is equivalent to
\begin{align}
\label{eq:5.4}
\sum_{i=p+1}^{2p}\sigma_i^2(M_{\hat{\Pi}}+E_{\hat{\Pi}}) \le\sum_{i=p+1}^{2p}\sigma_i^2(M_{\Pi^*}+E_{\Pi^*}).
\end{align}
Since for any tall matrix $A$, we have $\sigma_i^2(A)=\lambda_i(A^TA)$, \eqref{eq:5.4} is equivalent to
\begin{equation}
\label{eq:5.5}
\begin{gathered}
    \sum_{i=p+1}^{2p}\lambda_i(M_{\hat{\Pi}}^T M_{\hat{\Pi}}+M_{\hat{\Pi}}^T E_{\hat{\Pi}}+E_{\hat{\Pi}}^T M_{\hat{\Pi}}+ E_{\hat{\Pi}}^T E_{\hat{\Pi}}) \\
    \quad\le\sum_{i=p+1}^{2p}\lambda_i(M_{\Pi^*}^T M_{\Pi^*}+M_{\Pi^*}^T E_{\Pi^*}+E_{\Pi^*}^T M_{\Pi^*}+E_{\Pi^*}^T E_{\Pi^*}).
\end{gathered}
\end{equation}
According to Weyl's theorem, \citep{HJ87}, for Hermitian matrices $A$ and $B$, it holds that $\lambda_k(A)+\lambda_{min}(B)\le \lambda_k(A+B)\le \lambda_k(A)+\lambda_{max}(B)$.
We can use this relationship to lower bound the left hand side of the inequality \eqref{eq:5.5} and upper bound its right hand side. 
Applying the bound to the left hand side gives us
\begin{align}
\label{eq:5.6}
    &\quad \sum_{i=p+1}^{2p}\lambda_i(M_{\hat{\Pi}}^TM_{\hat{\Pi}}+M_{\hat{\Pi}}^TE_{\hat{\Pi}}+E_{\hat{\Pi}}^TM_{\hat{\Pi}}+E_{\hat{\Pi}}^TE_{\hat{\Pi}}) \\ \notag
    &\ge \sum_{i=p+1}^{2p}\lambda_i(M_{\hat{\Pi}}^TM_{\hat{\Pi}})+p\lambda_{2p}(M_{\hat{\Pi}}^TE_{\hat{\Pi}}+E_{\hat{\Pi}}^TM_{\hat{\Pi}})+p\lambda_{2p}(E_{\hat{\Pi}}^TE_{\hat{\Pi}}),
\end{align}
and applying it to the right hand side gives us
\begin{align}
\label{eq:5.7}
    &\quad \sum_{i=p+1}^{2p}\lambda_i(M_{\Pi^*}^TM_{\Pi^*}+M_{\Pi^*}^TE_{\Pi^*}+E_{\Pi^*}^TM_{\Pi^*}+E_{\Pi^*}^TE_{\Pi^*})\\ \notag
    &\le \sum_{i=p+1}^{2p}\lambda_i(M_{\Pi^*}^TM_{\Pi^*})+p\lambda_{1}(M_{\Pi^*}^TE_{\Pi^*}+E_{\Pi^*}^TM_{\Pi^*})+p\lambda_{1}(E_{\Pi^*}^TE_{\Pi^*}).
\end{align}
Note also that $\sum_{i=p+1}^{2p}\lambda_i(M_{\Pi^*}^TM_{\Pi^*})=0$ since  $\mathrm{rank}(M_{\Pi^*})=p.$

All together these result in us having an upper bound on the sum of the least $p$ singular values of $M_{\hat{\Pi}}^TM_{\hat{\Pi}}$ as
\begin{align}
\label{eq:5.8}
    &\quad \frac{1}{p}\sum_{i=p+1}^{2p}\lambda_i(M_{\hat{\Pi}}^TM_{\hat{\Pi}})\notag\\
    &\le \lambda_{1}(M_{\Pi^*}^TE_{\Pi^*}+E_{\Pi^*}^TM_{\Pi^*})-\lambda_{2p}(M_{\hat{\Pi}}^TE_{\hat{\Pi}}+E_{\hat{\Pi}}^TM_{\hat{\Pi}})+\lambda_{1}(E_{\Pi^*}^TE_{\Pi^*})-\lambda_{2p}(E_{\hat{\Pi}}^TE_{\hat{\Pi}}).
\end{align}
Since $M_{\hat{\Pi}}^TE_{\hat{\Pi}}+E_{\hat{\Pi}}^TM_{\hat{\Pi}}$ is not necessarily positive semidefinite, we further bound this term using the relationship  $$\lambda_{2p}(M_{\hat{\Pi}}^TE_{\hat{\Pi}}+E_{\hat{\Pi}}^TM_{\hat{\Pi}})\ge -\sigma_{1}(M_{\hat{\Pi}}^TE_{\hat{\Pi}}+E_{\hat{\Pi}}^TM_{\hat{\Pi}}).$$
Moreover, since $\lambda_{1}(M_{\Pi^*}^TE_{\Pi^*}+E_{\Pi^*}^TM_{\Pi^*})\le \sigma_{1}(M_{\Pi^*}^TE_{\Pi^*}+E_{\Pi^*}^TM_{\Pi^*})$ and $\lambda_{2p}(E_\Pi^TE_\Pi)\ge 0$, therefore \eqref{eq:5.8} implies that

\begin{align}
\label{eq:5.9}
&\quad \frac{1}{p}\sum_{i=p+1}^{2p}\lambda_i(M_{\hat{\Pi}}^TM_{\hat{\Pi}})\notag\\
&\le \sigma_{1}(M_{\Pi^*}^TE_{\Pi^*}+E_{\Pi^*}^TM_{\Pi^*})+\sigma_{1}(M_{\hat{\Pi}}^TE_{\hat{\Pi}}+E_{\hat{\Pi}}^TM_{\hat{\Pi}})+\lambda_{1}(E_{\Pi^*}^TE_{\Pi^*}).
\end{align}
To provide upper bounds on $\sigma_{1}(M_{\Pi^*}^TE_{\Pi^*}+E_{\Pi^*}^TM_{\Pi^*})$ and $\sigma_{1}(M_{\hat{\Pi}}^TE_{\hat{\Pi}}+E_{\hat{\Pi}}^TM_{\hat{\Pi}})$, we use the relationship that for any matrix $A$ and $B$, we have, \citep{HJ91}, $\sigma_{1}(A+B)\le\sigma_{1}(A)+\sigma_{1}(B)$ and $\sigma_{1}(AB)\le\sigma_{1}(A)\sigma_{1}(B)$. Apply it to $\sigma_{1}(M_\Pi^T E_\Pi+E_\Pi^TM_\Pi)$, where $\Pi$ indictaes either $\hat{\Pi}$ or $\Pi^*$, we get 
\begin{align*}
    &\quad \sigma_{1}(M^{\Pi^T} E^\Pi+E^{\Pi^T}M^\Pi)\\
    &\le 2\sigma_{1}(M^{\Pi^T} E^\Pi)\\
    &= 2\sigma_{1}\begin{bmatrix}
    R^TX^T\Pi^{*^T}E_2&R^TX^T\Pi^{*^T}\Pi E_1\\
    X^T\Pi^TE_2&X^TE_1
    \end{bmatrix}\\
    &=2\sigma_{1}\begin{bmatrix}
    R^T&0\\
    0 & I_p\end{bmatrix}\begin{bmatrix}
    X^T\Pi^{*^T}E_2&X^T\Pi^{*^T}\Pi E_1\\
    X^T\Pi^TE_2&X^TE_1
    \end{bmatrix}\\
    &\le 2\max\{1,\sigma_{1}(R)\}\sigma_{1}(\begin{bmatrix}
    X^T\Pi^{*^T}E_2&X^T\Pi^{*^T}\Pi E_1\\
    X^T\Pi^TE_2&X^TE_1
    \end{bmatrix})\\
    &\le 2\max\{1,\sigma_{1}(R)\}\|\begin{bmatrix}
    X^T\Pi^{*^T}E_2&X^T\Pi^{*^T}\Pi E_1\\
    X^T\Pi^TE_2&X^TE_1
    \end{bmatrix}\|_F\\
    &\le 2\max\{1,\sigma_{1}(R)\}(\|X^T\Pi^{*^T}E_2\|_F+\|X^T\Pi^{*^T}\Pi E_1\|_F+\|X^T\Pi^TE_2\|_F+\|X^TE_1\|_F).
\end{align*}
Let$$\Gamma_{\hat{\Pi}}:=\|X^T\Pi^{*^T}E_2\|_F+\|X^T\Pi^{*^T}\hat{\Pi} E_1\|_F+\|X^T\hat{\Pi}^TE_2\|_F+\|X^TE_1\|_F,$$

and
$$\Gamma_{\Pi^*}:=\|X^T\Pi^{*^T}E_2\|_F+\|X^T E_1\|_F+\|X^T\Pi^{*^T}E_2\|_F+\|X^TE_1\|_F,$$
we therefore have,
\begin{align}
\label{eq:5.10}
    \sigma_{1}(M^{\hat{\Pi}^T} E^{\hat{\Pi}}+E^{\hat{\Pi}^T}M^{\hat{\Pi}})&\le 2\max\{1,\sigma_{1}(R)\}\Gamma_{\hat{\Pi}}\notag\\
    \sigma_{1}(M^{\Pi^{*^T}} E^{\Pi^*}+E^{\Pi^{*^T}}M^{\Pi^*})&\le 2\max\{1,\sigma_{1}(R)\}\Gamma_{\Pi^*}.
\end{align}
Plug \eqref{eq:5.10} into \eqref{eq:5.9}, and since from assumption \ref{Assumption:2} we have  $\max\{1,\sigma_{1}(R)\}=\sigma_{1}(R)$, \eqref{eq:5.9} then implies
\begin{align}
\label{eq:5.11}
    \frac{1}{p}\sum_{i=p+1}^{2p}\lambda_i(M_{\hat{\Pi}}^TM_{\hat{\Pi}})\le 2\sigma_1(R)(\Gamma_{\Pi^*}+\Gamma_{\hat{\Pi}})+\lambda_{1}(E_{\Pi^*}^TE_{\Pi^*}).
\end{align}
Next, we upper bound the Procrustes quadratic loss, $\min_{Q\in \mathcal{O}(p)} \| \Pi^*X-\hat{\Pi} X Q\|_F^2$, using the sum of the $p$ smallest singular values of $M_{\hat{\Pi}}^TM_{\hat{\Pi}}$. Consider the left hand side of\eqref{eq:5.11}, since
\begin{align}
\label{eq:5.12}
    \sum_{i=p+1}^{2p}\lambda_i(M_{\hat{\Pi}}^TM_{\hat{\Pi}}) &= \sum_{i=p+1}^{2p}\sigma_i^2(M_{\hat{\Pi}})\\ \notag
    &=\sum_{i=p+1}^{2p}\sigma_i^2([\Pi^*XR|\hat{\Pi}X])\\\notag
    &=\sum_{i=p+1}^{2p}\sigma_i^2([\Pi^*X|\hat{\Pi}X]\begin{bmatrix}
    R&0\\
    0&I
    \end{bmatrix})\\\notag
    &\ge \sum_{i=p+1}^{2p}\sigma_i^2([\Pi^*X|\hat{\Pi}X])(\min\{\sigma_p(R),1\})^2
\end{align}
Plug \eqref{eq:5.12} into \eqref{eq:5.12}, and since $\min\{\sigma_p(R),1\}=\sigma_p(R)$, we have a lower bound on the sum of the $p$ smallest singular values of $[\Pi^*X|\hat{\Pi}X]$:
\begin{align}
\label{eq:5.13}
    \frac{1}{p}\sum_{i=p+1}^{2p}\sigma_i^2([\Pi^*X|\hat{\Pi}X])\le \frac{1}{\sigma_p^2(R)}\left[2\sigma_1(R)(\Gamma_{\Pi^*}+\Gamma_{\hat{\Pi}})+\lambda_{1}(E_{\Pi^*}^TE_{\Pi^*})\right].
\end{align}
Now Lemma~\ref{lem:pql_svals} tells us that when $\kappa(X)=1$, it holds that
$$\min_{Q\in \mathcal{O}(p)} \| \Pi^*X-\Pi X Q\|_F^2 \le2\sum_{i=1+p}^{2p} \sigma_i^2(\Pi^*X|\Pi X).$$
Then \eqref{eq:5.13} indicates
\begin{align}
\label{eq:5.14}
    &\frac{\min_{Q\in \mathcal{O}(p)} \| \Pi^*X-\hat{\Pi} X Q\|_F^2}{\|X\|_F^2}\le \frac{2p}{\sigma_p^2(R)\|X\|_F^2}\left[2\sigma_1(R)(\Gamma_{\hat{\Pi}}+\Gamma_{\Pi^*})+\lambda_1(E_{\Pi^*}^TE_{\Pi^*})\right].
\end{align}
For the last part of the proof, we provide probabilistic upper bounds on the three random terms $\Gamma_{\hat{\Pi}},\Gamma_{\Pi^*}$,and $\lambda_{1}(E_{\Pi^*}^TE_{\Pi^*})$ on the right hand side of \eqref{eq:5.14} using results from the lemmas.

To upper bound the error term $\lambda_1(E_{\Pi^*}^TE_{\Pi^*})$, we use Lemma~\ref{lem:evals} which says that 
\begin{align*}
    \Pr\left(\lambda_1(E_{\Pi^*}^TE_{\Pi^*})\ge 2n\lambda_1(\Sigma)(1+\epsilon)\right) \le 2p*\exp\left(\frac{-cn\epsilon^2\lambda_1(\Sigma)}{\|\Sigma\|_*}\right)~for~\epsilon \le 4n,
\end{align*}
where $c\ge 1/32$.\\
Take $\epsilon=\sqrt{\frac{\eta\log(n)\tr(\Sigma)}{cn\lambda_1(\Sigma)}}$, it gives,
\begin{align}
\label{eq:5.15}
    \Pr\left(\lambda_1(E_{\Pi^*}^TE_{\Pi^*})\ge 2n\lambda_1(\Sigma)(1+a_n \eta)\right) \le \frac{2p}{n^{\eta^2}}.
\end{align}
where $a_n = \sqrt{\frac{\tr(\Sigma)}{\lambda_1(\Sigma)}\frac{\log(n)}{c n}}$

Next we bound $\Gamma_{\hat{\Pi}}$ and $\Gamma_{\Pi^*}$.
Each term here can be bounded using
\begin{align*}
    &\quad\|X^T\hat{\Pi}E_2\|_F^2\\
    &\leq \|X^T \hat{\Pi}\|_F^2 \lambda_1(E_2^TE_2)\\
    &\leq \|X\|_F^2\left(2n\lambda(\Sigma)\left(1+\eta a_n\right)\right),
\end{align*}
where the second inequality is with high probability by Eq.~\eqref{eq:5.15}.
Hence, provided the event in Eq~\eqref{eq:5.15} does not occur, it holds that
\begin{align*}
   &\quad \Gamma_{\Pi^*} + \Gamma_{\hat{\Pi}}\\
   &\leq 8\|X\|_F\sqrt{2n\lambda(\Sigma)\left(1+\eta a_n\right)}.
\end{align*}

Plugging \eqref{eq:5.15}  and the above into \eqref{eq:5.14} gives an upper bound of the normalized Procrustes quadratic loss as 

\begin{align}
    &\frac{\min_{Q\in \mathcal{O}(p)} \| \Pi^*X-\hat{\Pi} X Q\|_F^2}{\|X\|_F^2}\notag\\
    &\le \frac{2p}{\sigma_p^2(R)\|X\|_F^2}\left(2\sigma_1(R)(\Gamma_{\hat{\Pi}}+\Gamma_{\Pi^*})+\lambda_1(E_{\Pi^*}^TE_{\Pi^*})\right)\notag\\
    &\le \frac{2p}{\sigma_p^2(R)\|X\|_F^2}\left[16\sigma_1(R) \|X\|_F\sqrt{2n\lambda_1(\Sigma)\left(1+\eta a_n\right)} +2n\lambda_1(\Sigma)\left(1+\eta a_n\right)\right]
    \notag\\
    &\le \frac{2p}{\sigma_p^2(R)\|X\|_F^2}\left(1+\eta a_n\right)\lambda_1(\Sigma)\left[16\sigma_1(R) \|X\|_F\sqrt{2n} +2n\right]
\end{align}
with probability at least $1 - n^{-\eta^2}$.

\section{Technical Lemmas}\label{app:tech_lemma}

Here we state and prove the technical lemmas needed to prove theorem 1.

\begin{lemma}\label{lem:evals}
Assume $E^\Pi=[E_2|\Pi E_1], E_1\sim E_2\sim N(0,\Sigma)$, let $\Delta^{\Pi} =  E^{\Pi^T}E^\Pi$, then,
$$\Pr\left(\lambda_1(\Delta^{\Pi^*})\ge 2n\lambda_1(\Sigma)(1+\epsilon)\right) \le 2p*exp\left(\frac{-cn\epsilon^2\lambda_1(\Sigma)}{\|\Sigma\|_*}\right)~for~\epsilon \le 4n.$$
where $c$ is at least $1/32$.
\end{lemma}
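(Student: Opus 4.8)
The plan is to reduce the bound on $\lambda_1(\Delta^{\Pi^*}) = \lambda_1(E_{\Pi^*}^T E_{\Pi^*})$ to a tail bound on the largest eigenvalue of a Wishart-type matrix built from the rows of $E_1$ and $E_2$. First I would observe that $E_{\Pi^*} = [E_2 \mid \Pi^* E_1]$ has $n$ rows and $2p$ columns, and that $\Delta^{\Pi^*} = E_2^T E_2 + E_1^T \Pi^{*T}\Pi^* E_1 \oplus$-style block structure; more precisely writing it out,
\begin{align*}
\Delta^{\Pi^*} = \begin{bmatrix} E_2^T E_2 & E_2^T \Pi^* E_1 \\ E_1^T \Pi^{*T} E_2 & E_1^T E_1 \end{bmatrix}.
\end{align*}
Since $\Pi^*$ merely permutes rows, $E_1^T \Pi^{*T} \Pi^* E_1 = E_1^T E_1$, and the cross term is a sum of outer products of (permuted) independent rows. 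The key point is that $\lambda_1(\Delta^{\Pi^*}) \le \lambda_1(E_2^T E_2) + \lambda_1(E_1^T E_1) + 2\sigma_1(E_2^T \Pi^* E_1)$ by a block-triangle/Weyl argument, or more cleanly, that $\lambda_1(\Delta^{\Pi^*}) \le 2\lambda_1(E_1^T E_1) + 2\lambda_1(E_2^T E_2) $ because $\Delta^{\Pi^*} \preceq 2\,\mathrm{diag}(E_2^T E_2, E_1^T E_1)$ would not quite hold, so instead I would use $\Delta^{\Pi^*} = G^T G$ where $G = E_{\Pi^*}$ and bound $\lambda_1(G^T G) = \|G\|_{op}^2 \le \|[E_2 \mid 0]\|_{op}^2 + \|[0 \mid \Pi^* E_1]\|_{op}^2 \cdot$(not additive for operator norm either) — so the right move is $\|G\|_{op} \le \|E_2\|_{op} + \|E_1\|_{op}$, giving $\lambda_1(\Delta^{\Pi^*}) \le (\|E_1\|_{op} + \|E_2\|_{op})^2 \le 2\|E_1\|_{op}^2 + 2\|E_2\|_{op}^2 = 2\lambda_1(E_1^T E_1) + 2\lambda_1(E_2^T E_2)$.

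Next I would invoke a concentration inequality for the largest eigenvalue of the sample covariance of i.i.d.\ Gaussian rows. Each $E_k^T E_k = \sum_{i=1}^n e_i e_i^T$ with $e_i \sim N(0,\Sigma)$ i.i.d., so $\mathbb{E}[E_k^T E_k] = n\Sigma$ and $\lambda_1(\mathbb{E}[E_k^T E_k]) = n\lambda_1(\Sigma)$. Using a matrix Bernstein / matrix Chernoff bound (or the Gaussian covariance concentration result of \citet{GT14} that the proof already cites), one gets
\begin{align*}
\Pr\left(\lambda_1(E_k^T E_k) \ge n\lambda_1(\Sigma)(1+\delta)\right) \le p \cdot \exp\left(\frac{-c' n \delta^2 \lambda_1(\Sigma)}{\tr(\Sigma)}\right)
\end{align*}
for $\delta$ in an appropriate range (roughly $\delta \le O(n)$, matching the stated $\epsilon \le 4n$), where the intrinsic dimension $\tr(\Sigma)/\lambda_1(\Sigma) = \|\Sigma\|_*/\lambda_1(\Sigma)$ enters the exponent. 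I would then apply this to $k=1$ and $k=2$ separately, choose $\delta = \epsilon$, and combine via a union bound: the event $\{\lambda_1(\Delta^{\Pi^*}) \ge 2n\lambda_1(\Sigma)(1+\epsilon)\}$ is contained in $\{\lambda_1(E_1^T E_1) \ge n\lambda_1(\Sigma)(1+\epsilon)\} \cup \{\lambda_1(E_2^T E_2) \ge n\lambda_1(\Sigma)(1+\epsilon)\}$, which has probability at most $2p\exp(-cn\epsilon^2\lambda_1(\Sigma)/\|\Sigma\|_*)$. Tracking the constant through the chosen concentration inequality should yield $c \ge 1/32$.

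The main obstacle is getting the constant $1/32$ and the range $\epsilon \le 4n$ exactly right: these depend on which concentration statement one imports and on the factor-of-2 losses incurred in passing from $\lambda_1(\Delta^{\Pi^*})$ to the individual $\lambda_1(E_k^T E_k)$. The cleanest route is to state the single-matrix bound with an explicit constant $c'$ (e.g.\ $c' = 1/8$ from a standard sub-exponential tail for $\chi^2$-type quantities applied coordinate-wise after diagonalizing $\Sigma$), note that replacing $\epsilon$ by $\epsilon$ on the $\lambda_1(\Sigma)(1+\epsilon)$ bound after the factor-of-2 blow-up costs another factor in the exponent, and then verify $c = c'/4 \ge 1/32$. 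Everything else — the block decomposition, the operator-norm triangle inequality, and the union bound — is routine.
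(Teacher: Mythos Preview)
Your overall strategy---bound $\lambda_1(\Delta^{\Pi^*})$ by the two diagonal-block eigenvalues, apply the Gaussian covariance concentration of \citet{GT14} to each $E_k^T E_k$, then union bound---is exactly the paper's approach. The paper cites the same concentration result (their Theorem~7.1) and combines the two events in the same way.

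There is one concrete gap. Your operator-norm triangle inequality gives
\[
\lambda_1(\Delta^{\Pi^*}) \le 2\lambda_1(E_1^T E_1) + 2\lambda_1(E_2^T E_2),
\]
with a factor of $2$. But the containment you then assert,
\[
\{\lambda_1(\Delta^{\Pi^*}) \ge 2n\lambda_1(\Sigma)(1+\epsilon)\} \subset \{\lambda_1(E_1^T E_1) \ge n\lambda_1(\Sigma)(1+\epsilon)\} \cup \{\lambda_1(E_2^T E_2) \ge n\lambda_1(\Sigma)(1+\epsilon)\},
\]
requires the sharper inequality $\lambda_1(\Delta^{\Pi^*}) \le \lambda_1(E_1^T E_1) + \lambda_1(E_2^T E_2)$, without the factor of $2$. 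From your inequality you would only get containment in the events $\{\lambda_1(E_k^T E_k) \ge \tfrac{1}{2}n\lambda_1(\Sigma)(1+\epsilon)\}$, and $\tfrac{1}{2}(1+\epsilon)$ is not of the form $1+\epsilon'$ with $\epsilon'>0$ when $\epsilon<1$, so the concentration bound does not apply and you cannot recover the stated constant $c\ge 1/32$ (or any constant) in the small-$\epsilon$ regime. The paper avoids this by invoking directly the block inequality $\lambda_1(\Delta^{\Pi}) \le \lambda_1(E_2^T E_2) + \lambda_1(E_1^T E_1)$ (Horn--Johnson, \emph{Topics in Matrix Analysis}, Problem~3.5.22). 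This sharper bound follows from a one-line Cauchy--Schwarz argument: for $G=[G_1\mid G_2]$ and $\|x_1\|^2+\|x_2\|^2=1$,
\[
\|G_1 x_1 + G_2 x_2\|^2 \le (\|G_1\|_{op}\|x_1\| + \|G_2\|_{op}\|x_2\|)^2 \le \|G_1\|_{op}^2 + \|G_2\|_{op}^2.
\]
Replace your factor-of-$2$ step with this and the rest of your proposal goes through verbatim, matching the paper.
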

\begin{proof}
Let
$$\Delta^\Pi =  E^{\Pi^T}E^\Pi=\begin{bmatrix}
E_2^T E_2 & E_2^T\Pi E_1 \\
E_1^T\Pi ^T E_2 &E_1^T E_1
\end{bmatrix},$$
we have the relationship:
$$(\frac{\sigma_1(E^\Pi)}{n})^2=\frac{\lambda_1(\Delta^\Pi)}{n^2}.$$
Now, from (Topics in Matrix Analysis, Problem 3.5.22), we know that $$\lambda_1(\Delta^{\Pi})\le \lambda_1(E_2^T E_2)+\lambda_1(E_1^T E_1);$$ 
From (Thm 7.1, Gittens \& Tropp, 2011), we have
$$P\left(\frac{\lambda_1(E_1^T E_1^T)}{n}\ge (1+\epsilon)\lambda_1(\Sigma)\right) \le p*exp\left(\frac{-cn\epsilon^2}{\Sigma_{i=1}^p\frac{\lambda_i(\Sigma)}{\lambda_1(\Sigma)}}\right)~for~\epsilon \le4n,$$
Therefore,
\begin{align*}
    &P\left(\frac{\lambda_1(\Delta^\Pi)}{n} \ge 2(1+\epsilon)\lambda_1(\Sigma)\right)
    \\&\leq P\left(\frac{\lambda_1(E_2^T E_2)+\lambda_1(E_1^T E_1^T)}{n}\ge (1+\epsilon)\lambda_1(\Sigma)+(1+\epsilon)\lambda_1(\Sigma)\right)\\
    &\le P\left(\frac{\lambda_1(E_2^T E_2^T)}{n}\ge (1+\epsilon)\lambda_1(\Sigma)\bigcup  \frac{\lambda_1(E_1^T E_1)}{n}\ge (1+\epsilon)\lambda_1(\Sigma)\right)\\
    &= P\left(\frac{\lambda_1(E_2^T E_2^T)}{n}\ge (1+\epsilon)\lambda_1(\Sigma)\right)+P\left( \frac{\lambda_1(E_1^T E_1)}{n}\ge (1+\epsilon)\lambda_1(\Sigma)\right)\\
    &\le 2p*exp\left(\frac{-2cn\epsilon^2}{\Sigma_{i=1}^p\frac{\lambda_i(\Sigma)}{\lambda_1(\Sigma)}}\right)\\&
    =2p*exp\left(\frac{-2cn\epsilon^2\lambda_1(\Sigma)}{\|\Sigma\|_*}\right),~for~\epsilon \le4n.
\end{align*}
\end{proof}

\begin{lemma} \label{lem:pql_svals}
Assume $\kappa(X)=1$, we have the relationship
$$\min_{Q\in \mathcal{O}(p)} \| X-\Pi X Q\|_F^2 \le2\sum_{i=1+p}^{2p} \sigma_i^2(X|\Pi X).$$
\end{lemma}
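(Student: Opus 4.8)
The plan is to make both sides explicit in terms of the singular values of the $p\times p$ matrix $M:=\sigma^{-2}X^{T}\Pi X$, where $\sigma:=\sigma_{1}(X)$, and in fact to prove the equality from which the stated bound follows immediately. First I would unpack $\kappa(X)=1$: it forces $\sigma_{1}(X)=\cdots=\sigma_{p}(X)=\sigma$, so $X$ has full column rank (hence $n\ge p$) and $X^{T}X=\sigma^{2}I_{p}$, i.e.\ $X/\sigma$ has orthonormal columns. Since $\Pi$ is a permutation, $(\Pi X)^{T}(\Pi X)=X^{T}X=\sigma^{2}I_{p}$ and $\|\Pi X\|_{F}=\|X\|_{F}$, so $\Pi X/\sigma$ also has orthonormal columns, and therefore $M=(X/\sigma)^{T}(\Pi X/\sigma)$ is a product of two matrices with orthonormal columns; in particular its singular values $1\ge s_{1}\ge\cdots\ge s_{p}\ge 0$ all lie in $[0,1]$. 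Fix an SVD $M=U\,\mathrm{diag}(s_{1},\dots,s_{p})\,V^{T}$ with $U,V\in\mathcal{O}(p)$.

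For the left-hand side, expanding and using $\|X\|_{F}^{2}=\|\Pi XQ\|_{F}^{2}=p\sigma^{2}$ yields
\[
\|X-\Pi XQ\|_{F}^{2}=2p\sigma^{2}-2\sigma^{2}\,\tr(MQ),
\]
so minimizing over $Q\in\mathcal{O}(p)$ is the orthogonal Procrustes problem of maximizing $\tr(MQ)$; its value is $\|M\|_{*}=\sum_{i=1}^{p}s_{i}$, attained at $Q=VU^{T}$. Hence
\[
\min_{Q\in\mathcal{O}(p)}\|X-\Pi XQ\|_{F}^{2}=2\sigma^{2}\sum_{i=1}^{p}(1-s_{i}).
\]

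For the right-hand side I would compute the Gram matrix of the concatenation,
\[
[X\,|\,\Pi X]^{T}[X\,|\,\Pi X]=\sigma^{2}\begin{bmatrix} I_{p} & M\\ M^{T} & I_{p}\end{bmatrix},
\]
and block-diagonalize it: conjugating by the orthogonal matrix $\mathrm{diag}(U,V)$ replaces $M$ by $D:=\mathrm{diag}(s_{1},\dots,s_{p})$, and a symmetric coordinate permutation turns $\begin{bmatrix} I_{p} & D\\ D & I_{p}\end{bmatrix}$ into a block-diagonal matrix with $2\times2$ blocks $\begin{bmatrix}1 & s_{i}\\ s_{i} & 1\end{bmatrix}$, of eigenvalues $1\pm s_{i}$. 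Thus the squared singular values of $[X|\Pi X]$ are exactly $\{\sigma^{2}(1+s_{i})\}_{i=1}^{p}\cup\{\sigma^{2}(1-s_{i})\}_{i=1}^{p}$; since each $1+s_{i}\ge 1\ge 1-s_{j}$, the $p$ smallest of them are precisely the $\sigma^{2}(1-s_{i})$, giving $\sum_{i=p+1}^{2p}\sigma_{i}^{2}([X|\Pi X])=\sigma^{2}\sum_{i=1}^{p}(1-s_{i})$. Comparing with the previous display, $\min_{Q}\|X-\Pi XQ\|_{F}^{2}=2\sum_{i=p+1}^{2p}\sigma_{i}^{2}([X|\Pi X])$, which is in fact stronger than the claimed inequality.

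The delicate step is the last identification — that the $p$ smallest squared singular values of $[X|\Pi X]$ are the numbers $\sigma^{2}(1-s_{i})$ rather than some interleaving of the two families $\{1\pm s_{i}\}$. This is exactly where $\kappa(X)=1$ enters: the bound $s_{i}=\sigma_{i}(M)\le 1$, a consequence of the orthonormal-columns structure of $X/\sigma$ and $\Pi X/\sigma$, is what separates $\{1+s_{i}\}$ from $\{1-s_{i}\}$. Without this hypothesis the closed form (and with it the factor-$2$ relation) can break down, which is also why the surrounding discussion rescales to $XV_{X}S_{X}^{-1}$ when $\kappa(X)\ne 1$.
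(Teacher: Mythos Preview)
Your proof is correct and in fact establishes the equality $\min_{Q\in\mathcal{O}(p)}\|X-\Pi XQ\|_F^2 = 2\sum_{i=p+1}^{2p}\sigma_i^2([X|\Pi X])$, which is stronger than the inequality stated. The route, however, is quite different from the paper's.

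The paper proceeds variationally: it writes $\sum_{i=p+1}^{2p}\sigma_i^2([X|\Pi X])$ as $\min_{U^TU+V^TV=I}\|XU+\Pi XV\|_F^2$, expands the square, and separately lower-bounds the quadratic part (using $\kappa(X)=1$ to get $p\sigma_{\min}(X)^2=\|X\|_F^2$) and upper-bounds the cross term. The latter requires a companion lemma (Lemma~\ref{lem:max_uv}) that solves the constrained problem $\max_{U^TU+V^TV=I}2\tr(U^TX^T\Pi XV)=\max_{Q\in\mathcal{O}(p)}\tr(X^T\Pi XQ)$ via Lagrange multipliers. Because the minimum of the sum is bounded by the sum of separate bounds on its pieces, the paper only obtains the one-sided inequality.

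Your argument bypasses all of this by computing both sides in closed form. The key observation that $\kappa(X)=1$ forces $X^TX=\sigma^2 I_p$ lets you write the Gram matrix of $[X|\Pi X]$ as $\sigma^2\bigl[\begin{smallmatrix}I&M\\ M^T&I\end{smallmatrix}\bigr]$ and read off its spectrum as $\{\sigma^2(1\pm s_i)\}$ from the SVD of $M$; the same structure reduces the Procrustes side to $2\sigma^2\sum_i(1-s_i)$ via the standard nuclear-norm identity. The only place the hypothesis is genuinely needed is, as you note, to ensure $s_i\le 1$ so that the families $\{1+s_i\}$ and $\{1-s_i\}$ do not interleave. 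This is shorter, avoids the auxiliary optimization lemma entirely, and delivers the sharp constant; the paper's variational approach, while more circuitous here, has the virtue of being adaptable to settings where the Gram matrix does not block-diagonalize so cleanly.
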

\begin{proof}
\begin{align*}
\sum_{i=1+p}^{2p} \sigma_i^2(X|\Pi X)
&= \min_{U,V\in \Re^{p\times p}: U^TU+V^TV = I} \left\|(X|\Pi X) \begin{pmatrix} U \\ V \end{pmatrix}\right\|_F^2 \\
&= \min_{U,V\in \Re^{p\times p}: U^TU+V^TV = I} \left\|(X|-\Pi X) \begin{pmatrix} U \\ V \end{pmatrix}\right\|_F^2 \\
&= \min_{U,V\in \Re^{p\times p}: U^TU+V^TV = I} \tr(U^TX^TX U + V X^TXV - 2U^TX^T \Pi X V) \\
&> \min_{U,V\in \Re^{p\times p}: U^TU+V^TV = I} \tr(U^TX^TX U + V X^TXV)\\
&\quad- \max_{U,V\in \Re^{p\times p}: U^TU+V^TV = I} 2\tr(U^TX^T \Pi X V) \\
&> \min_{U,V\in \Re^{p\times p}: \|U\|_F^2 +\|V\|^2 = p} \tr(U^TX^TX U + V X^TXV)\\
&\quad- \max_{U,V\in \Re^{p\times p}: U^TU+V^TV = I} 2\tr(U^TX^T \Pi X V) \\
&= p\sigma_{min}(X)^2 - \max_{U,V\in \Re^{p\times p}: U^TU+V^TV = I} 2\tr(U^TX^T \Pi X V)\\
&> \|X\|_F^2/\kappa(X)^2 - \max_{U,V\in \Re^{p\times p}: U^TU+V^TV = I} 2\tr(U^TX^T \Pi X V)\\
& =^{(i)} \|X\|_F^2 - \max_{U,V\in \Re^{p\times p}: U^TU+V^TV = I} 2\tr(U^TX^T \Pi X V)\\
& =^{(ii)}\|X\|_F^2 - \max_{Q\in \mathcal{O}(p)} \tr(X^T\Pi X Q)\\
&=^{(iii)}\frac{1}{2}\min_{Q\in \mathcal{O}(p)} \| X-\Pi X Q\|_F^2
\end{align*}
Step (i) follows since $\kappa(X)=1$.
Step (ii) is due to Lemma~\ref{lem:max_uv}.

Step (iii) follows since,
\begin{align*}
&\min_{Q\in \mathcal{O}(p)} \| X-\Pi X Q\|_F^2 
\\= & \min_{Q\in \mathcal{O}(p)}2(\|X\|_F^2-\tr(X^T\Pi X Q)).
\end{align*}
Hence we reach at,
$$\min_{Q\in \mathcal{O}(p)} \| X-\Pi X Q\|_F^2 \le2\sum_{i=1+p}^{2p} \sigma_i^2(X|\Pi X).$$
\end{proof}
\begin{lemma}\label{lem:max_uv}
    Under the same assumptions and notation as Lemma~\ref{lem:pql_svals},
    $$\max_{U,V\in \Re^{p\times p}: U^TU+V^TV = I} 2\tr(U^TX^T \Pi X V)=\max_{Q\in \mathcal{O}(p)} \tr(X^T\Pi X Q).$$
\end{lemma}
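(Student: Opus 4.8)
The plan is to establish the equality by showing each side is $\le$ the other. First I would handle the inequality $\max_{U,V:\,U^TU+V^TV=I} 2\tr(U^TX^T\Pi X V) \le \max_{Q\in\mathcal{O}(p)}\tr(X^T\Pi X Q)$. Given any feasible pair $(U,V)$, the key observation is that $2U^TX^T\Pi X V$ is the off-diagonal block of the matrix $\bigl(\begin{smallmatrix} U \\ V\end{smallmatrix}\bigr)^T \bigl(\begin{smallmatrix} 0 & X^T\Pi X \\ X^T\Pi X & 0\end{smallmatrix}\bigr)\bigl(\begin{smallmatrix} U \\ V\end{smallmatrix}\bigr)$-type expression; more directly, since $U^TU+V^TV=I$, the $n\times 2p$-to-$p$ block matrix $W=\bigl(\begin{smallmatrix} U \\ V\end{smallmatrix}\bigr)$ has orthonormal columns, so $W$ extends to a matrix whose columns together with $p$ more form an orthonormal basis. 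I would use the substitution that lets one write $2\tr(U^TX^T\Pi X V)=\tr(X^T\Pi X Q)$ for an appropriately constructed $Q$: concretely, set $Q=2VU^T$ or relate $Q$ to the product structure, and verify that the constraint $U^TU+V^TV=I$ forces the relevant matrix to be orthogonal (or a contraction, after which one passes to an extreme point). The cleanest route is probably to note $\max_{U^TU+V^TV=I}2\tr(U^TAV)=\|A\|_*$ (the nuclear norm) via the variational characterization of singular values, and separately $\max_{Q\in\mathcal{O}(p)}\tr(AQ)=\|A\|_*$ by the classical Procrustes/von Neumann trace inequality, with $A=X^T\Pi X$; both maxima equal $\sum_i\sigma_i(A)$, giving the equality directly.

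Thus the concrete steps, in order, are: (1) set $A=X^T\Pi X$ and recall $\max_{Q\in\mathcal{O}(p)}\tr(AQ)=\|A\|_*=\sum_{i=1}^p\sigma_i(A)$, which follows from the SVD $A=U_A\Sigma_A V_A^T$ by taking $Q=V_AU_A^T$ and from von Neumann's trace inequality for the upper bound; (2) show $\max_{U^TU+V^TV=I}2\tr(U^TAV)=\|A\|_*$ — for the lower bound exhibit $U=\frac{1}{\sqrt2}U_A$, $V=\frac{1}{\sqrt2}V_A$, which are feasible since $U^TU+V^TV=\frac12 I+\frac12 I=I$ and yield $2\tr(U^TAV)=\tr(U_A^TAV_A)=\sum_i\sigma_i(A)$; (3) for the matching upper bound, use $2\tr(U^TAV)\le 2\|U\|_F\,\|AV\|_F$-type estimates refined through the SVD, or observe that $\bigl(\begin{smallmatrix}U\\V\end{smallmatrix}\bigr)$ having orthonormal columns means $2\tr(U^TAV)=\tr\bigl(\bigl[\begin{smallmatrix}0&A\\A^T&0\end{smallmatrix}\bigr]\bigl(\begin{smallmatrix}U\\V\end{smallmatrix}\bigr)\bigl(\begin{smallmatrix}U\\V\end{smallmatrix}\bigr)^T\bigr)$ is bounded by the sum of the $p$ largest eigenvalues of the symmetric dilation $\bigl[\begin{smallmatrix}0&A\\A^T&0\end{smallmatrix}\bigr]$, which is exactly $\sum_{i=1}^p\sigma_i(A)=\|A\|_*$ by the well-known eigenvalue structure of the Jordan–Wielandt matrix; (4) combine (1) and (2)–(3) to conclude both sides equal $\|X^T\Pi X\|_*$.

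The main obstacle I anticipate is step (3): getting a clean upper bound on $2\tr(U^TAV)$ over the nonconvex constraint set $\{U^TU+V^TV=I\}$. The constraint is not that $\bigl(\begin{smallmatrix}U\\V\end{smallmatrix}\bigr)$ is orthogonal but only that its Gram matrix is the identity, i.e. it has orthonormal columns as a $2p\times p$ matrix; the right tool is Ky Fan's maximum principle applied to the symmetric $2p\times 2p$ dilation $\bigl[\begin{smallmatrix}0&A\\A^T&0\end{smallmatrix}\bigr]$, whose nonzero eigenvalues are $\pm\sigma_i(A)$, so the sum of its $p$ largest eigenvalues is $\sum_i\sigma_i(A)$. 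Once that identification is made the rest is bookkeeping. An alternative that sidesteps dilations is to directly parametrize: write the polar-type decomposition and reduce to the scalar inequality $2ab\le a^2+b^2$ applied coordinatewise in a basis that simultaneously diagonalizes the relevant quadratic forms — this is elementary but requires care that the basis change is legitimate under the constraint.
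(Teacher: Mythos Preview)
Your proposal is correct and takes a genuinely different route from the paper. The paper writes the SVD $X^T\Pi X = U_\Pi\Sigma_\Pi V_\Pi^T$, invokes the hypothesis $\kappa(X)=1$ to conclude that all singular values of $X^T\Pi X$ coincide (so $\Sigma_\Pi$ is a scalar multiple of the identity), and then solves the constrained maximization $\max_{U^TU+V^TV=I}\tr(V_\Pi^T VU^T U_\Pi)$ explicitly via Lagrange multipliers. Your argument instead identifies both sides with the nuclear norm $\|X^T\Pi X\|_*$: the right-hand side by the classical orthogonal Procrustes/von Neumann trace inequality, and the left-hand side by recognizing $2\tr(U^TAV)=\tr\bigl(W^T\bigl[\begin{smallmatrix}0&A\\A^T&0\end{smallmatrix}\bigr]W\bigr)$ with $W=\bigl(\begin{smallmatrix}U\\V\end{smallmatrix}\bigr)$ and applying Ky Fan's maximum principle to the Jordan--Wielandt dilation, whose top $p$ eigenvalues are exactly $\sigma_1(A),\dots,\sigma_p(A)$. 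Your approach is shorter, invokes standard named results rather than a bespoke Lagrangian calculation, and---notably---does not use the assumption $\kappa(X)=1$ at all, so it actually establishes the identity for an arbitrary matrix in place of $X^T\Pi X$. The paper's route is more self-contained in the sense of relying only on calculus, but at the cost of length and of being tied to the condition-number hypothesis that your argument shows to be unnecessary for this particular lemma.
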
 
\begin{proof}
Writing the singular value decomposition of $X^T\Pi X $ as $U_\Pi\Sigma_\Pi V_\Pi^T$, we have,
\begin{align*}
     &\quad\max_{U,V\in \Re^{p\times p}: U^TU+V^TV = I} 2\tr(X^T \Pi X VU^T)\\&=\max_{U,V\in \Re^{p\times p}: U^TU+V^TV = I} 2\tr(U_\Pi \Sigma_\Pi V_\Pi^T VU^T)\\
     &=\max_{U,V\in \Re^{p\times p}: U^TU+V^TV = I} 2\tr(\Sigma_\Pi V_\Pi^T VU^T U_\Pi)\\
     &=\max_{U,V\in \Re^{p\times p}: U^TU+V^TV = I} 2 \sum_{i=1}^{p}(\sigma_i(\Sigma_\Pi)*(V_\Pi^T VU^T U_\Pi)_{ii})\\
     &=^{(i)}\max_{U,V\in \Re^{p\times p}: U^TU+V^TV = I} 2\sigma_1(\Sigma_\Pi)* \sum_{i=1}^{p}(V_\Pi^T VU^T U_\Pi)_{ii}\\
     &=\max_{U,V\in \Re^{p\times p}: U^TU+V^TV = I} 2\sigma_1(\Sigma_\Pi)* \tr(V_\Pi^T VU^T U_\Pi)
     \end{align*}
Step (i) follows since we have $\kappa(X)=\kappa(\Pi X)=1$, which implies that 
$$1 \le \kappa(X^T\Pi X)=\frac{\sigma_{max}(X^T\Pi X)}{\sigma_{min}(X^T\Pi X)}\le \frac{\sigma_{max}(X^T)\sigma_{max}(\Pi X)}{\sigma_{min}(X)\sigma_{min}(\Pi X)}\le 1$$
then, $\kappa(X^T\Pi X)=1,$
indicating that $\sigma_1(\Sigma_\Pi)=...=\sigma_p(\Sigma_\Pi)$.

Now let,
\begin{align*}
    g_1 = \tr(V_\Pi^T VU^T U_\Pi)\\
    g_2 = \tr[L(U^TU+V^TV-I)]
\end{align*}
where the $(p\times p)$ matrix $L$ is a matrix of (unknown) Lagrange multipliers.
The function $g$ to be differentiated partially with respect to the elements of $U$ and $V$ is then
\begin{align*}
    g = g_1 + g_2
\end{align*}
Recall trace derivative rules:
\begin{align*}
    \nabla_X\tr(AXB)=A^TB^T\\
    \nabla_X\tr(AX^TB)=BA\\
    \nabla_X\tr(BX^TX)=XB^T+XB
\end{align*}
Therefore,
\begin{align*}
    \nabla_V \tr(V_\Pi^T VU^T U_\Pi-L(U^TU+V^TV-I))|_{\Bar{U},\Bar{V}}=0\\
    \nabla_U \tr(V_\Pi^T VU^T U_\Pi-L(U^TU+V^TV-I))|_{\Bar{U},\Bar{V}}=0
\end{align*}
Gives,
\begin{align*}
    V_\Pi U_\Pi^T \Bar{U} + \Bar{V} (L+L^T) =0\\
    U_\Pi V_\Pi ^T\Bar{V}  + \Bar{U}(L+L^T) =0
\end{align*}
Set $Q = L+L^T, A = U_\Pi^T\Bar{U}, B = V_\Pi^T\Bar{V}$, we can write it as
\begin{align*}
        A + BQ = 0 -(1)\\
        B + AQ = 0 -(2)\\
        A^TA+B^TB=I -(3)
\end{align*}
First, we have
\begin{align*}
    (1) &\implies AQ+BQQ^T = 0\\
    +(2) &\implies QQ^T = I
\end{align*}
Also, note that $Q$ is also symmetric, hence $Q$ is a diagnocal matrix consists of 1's and -1's.
Also,
\begin{align*}
  -A^TBQ - Q^TA^T B = I\\
\implies A^TB = -\frac{Q}{2}
\end{align*}
Note that our constrained optimization problem now becomes,
\begin{align*}
    \max \tr(A^TB)=\max \tr(-\frac{Q}{2})\\
    s.t ~ A^TA+B^TB=I
\end{align*}
Therefore the maximum is taking place when $Q=-\frac{I}{2}$, and we have 
\begin{align*}
   & A^TA=B^TB=\frac{I}{2}\\
    &\implies \Bar{U}^T\Bar{U} = \Bar{V}^T\Bar{V} = \frac{I}{2}\\
    &\implies \Bar{U}=\frac{Q_1}{\sqrt{2}}, \Bar{V}=\frac{Q_2}{\sqrt{2}}, Q_1^T Q_1=Q_2^T Q_2=I
\end{align*}
Therefore
\begin{align*}
         \max_{U,V\in \Re^{p\times p}: U^TU+V^TV = I} 2\tr(V_\Pi^T VU^T U_\Pi) &= \max_{Q_1, Q_2 \in \mathcal{O}(p)}  2 \tr(V_\Pi^T \frac{Q_1Q_2^T}{2}U_\Pi)\\
         &= \max_{Q \in \mathcal{O}(p)}   \tr(V_\Pi^T QU_\Pi)
\end{align*}
Hence,
\begin{align*}
    \max_{U,V\in \Re^{p\times p}: U^TU+V^TV = I} 2\tr(X^T \Pi X VU^T)&=\max_{U,V\in \Re^{p\times p}: U^TU+V^TV = I} 2\tr(U_\Pi \Sigma_\Pi V_\Pi^T VU^T)\\
    &=\max_{U,V\in \Re^{p\times p}: U^TU+V^TV = I} 2\sigma_1(\Sigma_\Pi)* \tr(V_\Pi^T VU^T U_\Pi)\\
 &= \max_{Q \in \mathcal{O}(p)}  \sigma_1(\Sigma_\Pi)* \tr(V_\Pi^T QU_\Pi)\\
 &=\max_{Q\in \mathcal{O}(p)} \tr(X^T\Pi X Q)
\end{align*}
\end{proof}

\end{document}